\title{Socle pairings on tautological rings}
\author{\vspace{0cm} Felix Janda and Aaron Pixton}
\institution{Department of Mathematics, University of Michigan, 2074 East Hall, 530 Church Street, Ann Arbor, MI 48109, USA}\\
\email{janda@umich.edu}}\\
\institution{Department of Mathematics, Massachusetts Institute of Technology, Cambridge, MA 02139, USA}\\
\email{apixton@mit.edu}}
\date{\vspace{-5ex}} % Empty date or tweak it according to your needs
\journal{\'Epijournal de G\'eom\'etrie Alg\'ebrique} % Epijournal name
\newtheorem{lem}{Lemma}
\newtheorem{thm}{Theorem}
\begin{document}

%%%%%%%%%%%%%%%%%%%%%%%%%%%%%%%
% Add the title to the document
%%%%%%%%%%%%%%%%%%%%%%%%%%%%%%%

\maketitle

%\contribution{}

%%%%%%%%%%%%%%%%%%%%%
% Dedication (if any)
%%%%%%%%%%%%%%%%%%%%%
%\dedication{}

%%%%%%%%%%%%%%%%%%%%%%%%%%%%%%%%%%%%%%%%%%%%%%%%%%%%%%%%%%
% Add abstract, Keywords, MSC classification (recommended)
% Never remove prelims section, make it rather empty
%%%%%%%%%%%%%%%%%%%%%%%%%%%%%%%%%%%%%%%%%%%%%%%%%%%%%%%%%%
\begin{prelims}

%\vspace{-0.55cm}

\def\abstractname{Abstract}
\abstract{ We study some aspects of the $\lambda_g$-pairing on the tautological
  ring of $M_g^c$, the moduli space of genus $g$ stable curves of
  compact type. We consider pairing $\kappa$-classes with pure
  boundary strata, all tautological classes supported on the boundary,
  or the full tautological ring. We prove that the rank of this
  restricted pairing is equal in the first two cases and has an
  explicit formula in terms of partitions, while in the last case the
  rank increases by precisely the rank of the $\lambda_g\lambda_{g -
    1}$-pairing on the tautological ring of $M_g$.}

\keywords{Tautological ring, kappa ring, Gorenstein conjecture,
moduli of curves}

\MSCclass{14H10}

\vspace{0.15cm}

\languagesection{Fran\c{c}ais}{%

%\vspace{-0.05cm}
\textbf{Titre. Accouplements-socles sur les anneaux tautologiques} \commentskip \textbf{R\'esum\'e.} Nous \'etudions certains aspects de l'accouplement $\lambda_g$ sur l'anneau tautologique de $M_g^c$, l'espace de modules des courbes stables de type compact de genre $g$. Nous consid\'erons l'accouplement de classes $\kappa$ avec des strates pures du bord, avec toutes les classes tautologiques support\'ees sur le bord ou bien avec l'anneau tautologique dans sa totalit\'e. Nous montrons que le rang de cet accouplement restreint est le m\^eme dans les deux premiers cas et a une expression explicite en termes de partitions, tandis que dans le dernier cas, le rang augmente pr\'ecis\'ement du rang  de l'accouplement $\lambda_g\lambda_{g - 1}$ sur l'anneau tautologique de $M_g$.}

\end{prelims}

%%%%%%%%%%%%%%%%%%%%%
% Content begins here
%%%%%%%%%%%%%%%%%%%%%

\newpage

% Add table of contents (optional)
\setcounter{tocdepth}{1} \tableofcontents

\setcounter{section}{0}
\section{Introduction}

Let $M_{g, n}$ be the moduli space of smooth curves of genus $g$ with
$n$ marked points and let $\overline M_{g, n}$ be the Deligne--Mumford
compactification, the moduli space of stable $n$-pointed nodal curves
of arithmetic genus $g$. Inside this, let $M_{g, n}^c$ be the subspace
of stable pointed curves of compact type, i.e.\ curves whose dual graph
is a tree.

The intersection theory of these moduli spaces of curves is a subject
of fundamental importance in algebraic geometry. When studying the
Chow ring $A^*(\overline M_{g, n})$, one is naturally led to consider
a subring consisting of the classes such as the Arbarello--Cornalba
$\kappa$-classes that are defined via certain tautological maps between
the $\overline M_{g, n}$. This subring is the {\it tautological ring}
$R^*(\overline M_{g. n})$. Tautological rings $R^*(M_{g, n})$ and
$R^*(M_{g, n}^c)$ for $M_{g, n}$ and $M^c_{g, n}$ can be defined by
restriction. We will primarily be interested in $R^*(M_g^c)$, the case
of compact type with no marked points.

Inside $R^*(M_{g, n}^c)$ there is the subring $\kappa^*(M_{g, n}^c)$
generated by the $\kappa$-classes $\kappa_1,\kappa_2,\ldots$. The kappa
ring $\kappa^*(M_{g, n}^c)$ has been studied in detail by
Pandharipande \cite{Pandharipande}. In particular, for $n > 0$ a
complete description of the kappa ring is given. For this reason we
concentrate on the case $n = 0$ in this paper.

When restricted to the moduli space of smooth curves $M_g$, the
tautological ring $R^*(M_g)$ is actually equal to the kappa ring
$\kappa^*(M_g)$. This means that on $M_g^c$, any tautological class
can be written as the sum of a polynomial in the $\kappa$-classes and
a class supported on the boundary. We denote by $BR^*(M_g^c)$ the
ideal of tautological classes supported on the boundary, so the
tautological ring $R^*(M_g^c)$ is linearly spanned by
$\kappa^*(M_g^c)$ and $BR^*(M_g^c)$.

A general element of $BR^*(M_g^c)$ is a linear combination of classes
obtained by taking the pushforward of tautological classes via gluing
maps
\begin{equation*}
  M_{g_1,n_1}^c\times M_{g_2,n_2}^c\times \cdots \times M_{g_k,n_k}^c \to M_g^c.
\end{equation*}
When the class $1$ is pushed forward along such a map, this
construction gives a pure boundary stratum. We let $PBR^*(M_g^c)$ denote
the linear subspace of $BR^*(M_g^c)$ generated by the pure boundary
strata.

There are natural bilinear pairings
\begin{align*}
  R^r(M_g^c) \times R^{2g - 3 - r}(M_g^c) \to R^{2g - 3}(M_g^c) \cong \mathbb Q, \\
  R^r(M_g) \times R^{g - 2 - r}(M_g) \to R^{g - 2}(M_g) \cong \mathbb Q,
\end{align*}
given by the product in the Chow ring and the socle evaluations.
These pairings are called the $\lambda_g$- and
$\lambda_g \lambda_{g - 1}$-pairings respectively because they may be
defined by integrating against these classes in $\overline M_g$.

In this paper we will study the restriction
\begin{equation*}
  \kappa^d(M_g^c) \times R^r(M_g^c) \to \mathbb Q
\end{equation*}
of the $\lambda_g$-pairing for $r + d = 2g - 3$, for any $g\ge 2$. The following
theorems, our main results, were previously conjectured by
Pandharipande.

\bigskip
\noindent
{\bf Housing Theorem.}
{\em
  The rank of the $\lambda_g$-pairing of $\kappa$-classes against
  boundary classes
  \begin{equation*}
    \kappa^d(M_g^c) \times BR^r(M_g^c) \to \mathbb Q
  \end{equation*}
  equals the rank of the $\lambda_g$-pairing of $\kappa$-classes
  against pure boundary strata
  \begin{equation*}
    \kappa^d(M_g^c) \times PBR^r(M_g^c) \to \mathbb Q.
  \end{equation*}
  Furthermore, these ranks are equal to the number of partitions of
  $d$ of length less than $r + 1$ plus the number of partitions of $d$
  of length $r + 1$ which contain at least two even parts.}
  
\bigskip
\noindent
{\bf Rank Theorem.}
{\em
  The rank of the $\lambda_g$-pairing of $\kappa$-classes against
  general tautological classes
  \begin{equation*}
    \kappa^d(M_g^c) \times R^r(M_g^c) \to \mathbb Q
  \end{equation*}
  equals the sum of the rank of the $\lambda_g$-pairing of
  $\kappa$-classes against boundary classes
  \begin{equation*}
    \kappa^d(M_g^c) \times BR^r(M_g^c) \to \mathbb Q
  \end{equation*}
  and the rank of the $\lambda_g\lambda_{g - 1}$-pairing
  \begin{equation*}
    \kappa^r(M_g) \times \kappa^{g - 2 - r}(M_g) \to \mathbb Q.
  \end{equation*}}

These theorems will be proven by direct combinatorial analysis of the
well~known formulae for calculating the integrals arising in the
pairings. In particular, we have no geometric explanation of the
Rank~Theorem, which connects the compact-type case and the smooth
case.

\subsection{Consequences}\label{sec:consq}

It has been conjectured by Faber \cite{MR1722541} that $\kappa^*(M_g)
= R^*(M_g)$ is a Gorenstein ring with socle in degree $g - 2$. He
verified this for $g \le 23$ by computing many relations between the
$\kappa$-classes and checking that they produced a Gorenstein
ring. However, starting in genus $24$, the known methods of producing
relations have failed to give enough relations to yield a Gorenstein
ring. In fact, the known relations have all been in the span of the
Faber--Zagier (FZ) relations, and these relations produce a Gorenstein
ring if and only if $g \le 23$.

There are therefore \emph{mystery relations} in $R^*(M_g)$: formal
polynomials in $\kappa$-classes which pair to zero with any $\kappa$
polynomial in $R^*(M_g)$ of complementary degree but are not a linear
combination of FZ relations. If one assumes Faber's Gorenstein
conjecture then these relations must hold in $R^*(M_g)$. Since FZ
relations extend to tautological relations in $R^*(\overline M_g)$
(this is a consequence of the proof of the FZ relations in
\cite{Pandharipande-Pixton}), a possible reason for the existence of
mystery relations might be if they do not extend tautologically to
$R^*(M_g^c)$ or $R^*(\overline M_g)$. The Rank Theorem can be
interpreted as saying that part of the obstruction to this extension
is zero: the mystery relations at least extend to classes in the
tautological ring of $M_g^c$ which pair to zero with the $\kappa$
subring. It is an interesting question whether the mystery relations
extend to classes in the tautological ring of $M_g^c$ which are
relations in the Gorenstein quotient (i.e.\ pair to zero with the
entire tautological ring).

In \cite{Pandharipande} Pandharipande gives a minimal set of
generators of $\kappa^*(M_{g, n}^c)$ for $n > 0$ and relates higher genus
relations to genus 0 relations. More precisely, he shows that there is
a surjective (graded) ring homomorphism
\begin{equation*}
  \kappa^*(M_{0, 2g+n}^c) \stackrel{\iota_{g, n}}{\to} \kappa^*(M_{g, n}^c),
\end{equation*}
which is an isomorphism for $n \ge 1$, or in degrees up to $g-2$ when
$n = 0$. The Rank Theorem gives us information about the $n = 0$ case
in higher degrees.
\begin{thm}
  Let $g \ge 2$, $0 \le e \le g-2$, and $d = g-1+e$. Let $\delta_d$ be
  the rank of the kernel of the map from $\kappa^d(M_g^c)$ to the
  Gorenstein quotient of $R^*(M_g^c)$. Let $\gamma_e$ be the rank of
  the space of $\kappa$-relations of degree $e$ in the Gorenstein
  quotient of $R^*(M_g)$. Let $N_e$ denote the number of partitions of $e$ of length greater than $g-1-e$. Then the degree-$d$ part of the kernel of
  $\iota_{g,0}$ has rank $\gamma_e - \delta_d - N_e$.
\end{thm}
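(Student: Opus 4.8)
The plan is to strip away the formal content using the surjectivity of $\iota_{g,0}$ and the Rank Theorem, reducing the statement to a single identity of dimensions that is then checked combinatorially. Throughout set $r = g-2-e = 2g-3-d$ and abbreviate $K_0 = \dim\kappa^d(M_{0,2g}^c)$ and $K_c = \dim\kappa^d(M_g^c)$. Since $\iota_{g,0}$ is a surjective graded homomorphism, its degree $d$ kernel has rank $K_0 - K_c$. Unwinding the definition of the Gorenstein quotient of $R^*(M_g^c)$ — the quotient by the radical of the $\lambda_g$-pairing — the kernel measured by $\delta_d$ is exactly the space of degree $d$ kappa classes pairing trivially against $R^r(M_g^c)$, so $\delta_d = K_c - \rho$, where $\rho$ is the rank of $\kappa^d(M_g^c) \times R^r(M_g^c) \to \mathbb{Q}$. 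In the same way $\gamma_e = \dim\kappa^e(M_g) - \rho_0$, where $\rho_0$ is the rank of the $\lambda_g\lambda_{g-1}$-pairing $\kappa^e(M_g) \times \kappa^r(M_g) \to \mathbb{Q}$ (using $g-2-r = e$).

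First I would invoke the Rank Theorem to write $\rho = \rho_B + \rho_0$, where $\rho_B$ is the rank of the boundary pairing $\kappa^d(M_g^c) \times BR^r(M_g^c) \to \mathbb{Q}$. Substituting the three relations above into $\gamma_e - \delta_d$, the two copies of $\rho_0$ cancel, leaving
\[
  \gamma_e - \delta_d = \dim\kappa^e(M_g) - K_c + \rho_B.
\]
Comparing with the kernel rank $K_0 - K_c$ and cancelling $K_c$, the theorem is equivalent to the dimension identity
\[
  \dim\kappa^d(M_{0,2g}^c) = \dim\kappa^e(M_g) + \rho_B.
\]
The cleanest way to package this is to compose $\iota_{g,0}$ with the projection onto the Gorenstein quotient $G^*$ of $R^*(M_g^c)$: surjectivity of $\iota_{g,0}$ identifies the kernel of the composite in degree $d$ with $(K_0 - K_c) + \delta_d$, while its image equals that of $\kappa^d(M_g^c)$ and so has dimension $\rho_B + \rho_0$ by the Rank Theorem. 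Thus the theorem amounts to the assertion that the genus $0$ kappa classes dying in $G^d$ form a space of the same dimension $\gamma_e$ as the degree $e$ kappa relations in the Gorenstein quotient of $R^*(M_g)$.

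The main obstacle is the dimension identity itself, which I expect to be the combinatorial heart of the argument. The Housing Theorem already presents $\rho_B$ as an explicit count of partitions of $d$ (those of length at most $r$, together with those of length exactly $r+1$ having at least two even parts). It remains to read off $\dim\kappa^d(M_{0,2g}^c)$ from Pandharipande's complete description of the genus $0$ kappa ring and to exhibit a bijection matching the resulting basis with the Housing partitions of $d$ on one side and a basis governing $\kappa^e(M_g)$ on the other. I anticipate that the degree shift $d - e = g-1$ is precisely what converts the genus $0$ length data into the thresholds $r$ and $r+1$ appearing in the Housing count, so the crux is a careful partition bookkeeping rather than any further geometry; indeed the excerpt already warns that no geometric explanation for the Rank Theorem is available, and the same combinatorial character should persist here. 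As consistency checks I would verify the boundary cases $e = 0$ and $e = g-2$, where the Housing terms and the socle pairings degenerate, in order to pin down the normalization and rule out off-by-one errors in the length and parity conditions.
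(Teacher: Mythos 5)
Your route is the same as the paper's: use surjectivity of $\iota_{g,0}$ to write the degree-$d$ kernel rank as $K_0-K_c$, write $\delta_d=K_c-\rho$, split $\rho=\rho_B+\rho_0$ by the Rank Theorem, and reduce to a single counting identity, with $K_0$ supplied by Pandharipande's genus-$0$ computation and $\rho_B$ by the Housing Theorem. However, there are two genuine gaps. The first is in your treatment of $\gamma_e$: a ``$\kappa$ relation of degree $e$ in the Gorenstein quotient'' is a formal polynomial in the $\kappa$ classes, i.e.\ an element of $\mathbb{Q}^{P(e)}$, that dies in the quotient, so the paper computes the rank of the third pairing as $|P(e)|-\gamma_e$, not as $\dim\kappa^e(M_g)-\gamma_e$. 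With your version the target identity becomes $K_0=\dim\kappa^e(M_g)+\rho_B$, which involves the geometrically unknown quantity $\dim\kappa^e(M_g)$ (strictly smaller than $|P(e)|$ once there are actual relations in $R^e(M_g)$, e.g.\ for $e>g/3$) and therefore cannot be settled by the ``partition bookkeeping'' you propose. The correct reduction is to a purely combinatorial statement comparing $|P(d,2g-2-d)|$, the Housing count, and the number of degree-$e$ $\kappa$ monomials.

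The second gap is that you stop at announcing the combinatorial identity rather than proving it; this is the only real content beyond citing the three ingredients. The missing step is: $|P(d,2g-2-d)|-\rho_B$ counts partitions of $d$ of length exactly $2g-2-d=r+1$ with at most one even part; since a partition of $d$ into $r+1$ parts has an even number of even parts (as $d\equiv r+1 \bmod 2$ forces this), these are exactly the partitions of $d$ into $r+1$ odd parts; the substitution $2a+1\mapsto a$ on parts gives a bijection with partitions of $e=d-g+1$ of length at most $r+1=g-1-e$. Note that this yields $|P(e,g-1-e)|$ rather than $|P(e)|$, so the formal $\kappa$ polynomials against which $\gamma_e$ is measured must effectively be restricted to $\kappa_\pi$ with $\ell(\pi)\le g-1-e$ (consistent with condition (3) in the paper's reformulation of the Rank Theorem, which cuts the $\lambda_g\lambda_{g-1}$ pairing down to partitions of length at most $r+1$). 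Your proposed sanity check at $e=g-2$ is well chosen: it is precisely where the distinction between $|P(e)|$ and $|P(e,g-1-e)|$, and between $\dim\kappa^e(M_g)$ and $|P(e)|$, becomes visible, and carrying it out would have forced the corrections above.
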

\begin{proof}
  We use the notation $|P(m)|$ for the number of partitions of $m$ and $|P(m,k)|$ for the number of partitions of $m$ of length at most $k$, so $N_e = |P(e)| - |P(e,g-1-e)|$. By \cite{Pandharipande}, the rank of $\kappa^d(M_{0, 2g}^c)$ is
  equal to $|P(d, 2g-2-d)|$. On the other side, the rank of $\kappa^d(M_g^c)$
  is equal to $\delta_d$ plus the rank of the first pairing appearing
  in the Rank Theorem. The rank of the second pairing appearing in the
  Rank Theorem is given by the Housing Theorem and is equal to $|P(d, 2g-2-d)| - X$, where $X$ is the number of partitions of $d$ of length $2g-2-d$ with no even parts. By subtracting one from each part and dividing by two, we have that $X = |P(e,g-1-e)|$. The rank of the
  third pairing appearing in the Rank Theorem is equal to
  $|P(e)| - \gamma_e$. Putting all these pieces together gives
\begin{align*}
&\dim_{\mathbb Q}\kappa^d(M_{0, 2g}^c) - \dim_{\mathbb Q}\kappa^d(M_g^c)\\
&= |P(d, 2g-2-d)| - \left(\delta_d + |P(d, 2g-2-d)| - |P(e,g-1-e)| + |P(e)| - \gamma_e\right)\\
&= \gamma_e - \delta_d - N_e,
\end{align*}
as desired. \hfill $\Box$
\end{proof}

\noindent{\bf Remark.}
The components $\gamma_e$ and $\delta_d$ appearing in the above
theorem both have conjectural values. The FZ relations give a
prediction for $\gamma_e$ (if they are the only relations in the first
half of the Gorenstein quotient and are linearly independent):
\[
\gamma_e = \begin{cases}
a(3e-g-1) &\text{ if }e \le \frac{g-2}{2}\\
a(3(g-2-e)-g-1) + |P(e)| - |P(g-2-e)| &\text{ else},\end{cases}
\]
where $a(n)$ is the number of partitions of $n$ with no parts of sizes
$5,8,11,14,\ldots$.

Although the Gorenstein conjecture in compact type is false (see
\cite{MR3530445}), it is still reasonable to predict that the kernel of
the map from $R^d(M^c_g)$ to its Gorenstein quotient will fail to
intersect the relatively small subring $\kappa^d(M^c_g)$. This would
imply that $\delta_d = 0$. Combining this prediction with the FZ
prediction for $\gamma_e$ gives a conjecture for all the Betti numbers
of $\kappa^*(M^c_g)$: we expect that
\begin{equation*}
\dim_{\mathbb Q}\kappa^{d}(M_g^c) = |P(d,2g-2-d)| - a(3d-4g+2) \quad \text{ if } 0\le d\le \frac{3g-2}{2},
\end{equation*}
along with a slightly more complicated formula in the case $\frac{3g-2}{2} < d \le 2g-3$.

%|P(d,2g-2-d)| - a(5g-3d-10) + |P(2g-3-d)| - |P(d-g+1,2g-2-d)| \text{ if } \frac{3g-2}{2} < d \le 2g-3

\subsection{Plan of the paper}

In Section~\ref{sec:defs}, we review basic facts about the
tautological ring. 
In Section~\ref{sec:housing}, we prove the Housing Theorem.
Finally, in Section~\ref{sec:rank}, we state and prove a slightly more
explicit version of the Rank Theorem (see Theorem~\ref{thm:rank}).

\subsection*{Acknowledgments}

The first named author wants to thank his advisor Rahul Pandharipande
for the introduction to this topic and various discussions. The beginning of Section
\ref{sec:consq} elaborates an email from him.

\
\hspace{-0.25cm}
The\,first\,named\,author\,was\,supported\,by\,the\,Swiss\,National\,Science\,Foundation\,grant\,SNF\,200021\_143274. The second named author was
supported by an NDSEG graduate fellowship.

\section{The tautological ring}\label{sec:defs}

\subsection{Tautological Classes}\label{sec:generators}

The subrings $R^*(\overline M_{g. n})$ of tautological classes in the
Chow rings $A^*(\overline M_{g, n})$ are collectively defined as the
smallest subrings which are closed under pushforward via the maps
forgetting markings $\overline M_{g, n} \to \overline M_{g, n - 1}$
and the gluing maps
\begin{equation*}
  \overline M_{g_1, n_1 \sqcup \{\star\}} \times \overline M_{g_2, n_2 \sqcup \{\bullet\}} \to \overline M_{g_1 + g_2, n_1 + n_2}
\end{equation*}
\begin{equation*}
  \text{and}
\end{equation*}
\begin{equation*}
  \overline M_{g, n \sqcup \{\star, \bullet\}} \to \overline M_{g + 1, n}
\end{equation*}
defined by gluing together $\star$ and $\bullet$.  It turns out that
nearly all classes on the moduli space of curves that appear naturally
in geometry lie in the tautological ring.

For each $i = 1,2,\ldots,n$, there is a line bundle $\mathbb{L}_i$ on
$\overline{M}_{g,n}$ given by the cotangent space at the $i$th marked
point. The first Chern classes of these line bundles are denoted by $\psi_i = c_1(\mathbb{L}_i) \in A^1(\overline{M}_{g,n})$. The
$\kappa$-classes are then pushforwards of powers of the $\psi$
classes:
\[
\kappa_m = \pi_*(\psi_{n+1}^{m+1}) \in A^m(\overline{M}_{g,n}),
\]
where $\pi$ is the forgetful map $\overline M_{g, n + 1} \to \overline M_{g, n}$.

It is well known (see e.g.\ \cite{MR1960923}) that the $\kappa$- and
$\psi$-classes combined with pushforward by the gluing morphisms alone
are sufficient to generate the tautological rings. In other words,
$R^*(\overline{M}_{g,n})$ is additively generated by classes of the
form
\begin{equation*}
  \xi_{\Gamma*} \left(\prod_{v\text{ vertex of }\Gamma} \theta_v\right),
\end{equation*}
where $\Gamma$ is a stable graph expressing the data of the gluing map
\begin{equation*}
  \xi_\Gamma\colon \prod_{v\text{ vertex of }\Gamma} \overline{M}_{g(v), n(v)} \to \overline{M}_{g,n}
\end{equation*}
and the $\theta_v \in R^*(\overline{M}_{g(v), n(v)})$ are arbitrary monomials in the
$\psi$- and $\kappa$-classes.

The tautological rings $R^*(M_{g,n}^c)$ and $R^*(M_{g,n})$ are defined
as the image of $R^*(\overline{M}_{g,n})$ under restriction. In the
case of $R^*(M_{g,n}^c)$, this means that the stable graph $\Gamma$
must be a tree, while $R^*(M_{g,n})$ is simply the subring of
polynomials in the $\kappa$- and $\psi$-classes.

The ring $R^*(M_{g,n}^c)$ has one-dimensional socle, in degree
$2g-3+n$:
\[
R^{2g-3+n}(M_{g,n}^c) \cong \mathbb{Q}.
\]
This gives a canonical (up to scaling) bilinear pairing on
$R^*(M_{g,n}^c)$, which can be realized explicitly by integrating
against the Hodge class $\lambda_g$:
\begin{equation*}
  R^*(M_{g,n}^c) \times R^*(M_{g,n}^c) \to \mathbb Q, \quad (\alpha, \beta) \mapsto \int_{\overline M_{g,n}} \alpha\beta\lambda_g.
\end{equation*}
Here, the integral is defined by taking any extensions of $\alpha$ and
$\beta$ to $R^*(\overline M_{g,n})$. It is independent of which
particular extension one has chosen because $\lambda_g$ vanishes on
the complement of $M_{g,n}^c$.

The $\lambda_g\lambda_{g - 1}$-pairing is a similar pairing for the
moduli space of smooth curves, given by
\begin{equation*}
  R^*(M_g) \times R^*(M_g) \to \mathbb Q, \quad (\alpha, \beta) \mapsto \int_{\overline M_g} \alpha\beta\lambda_g\lambda_{g - 1}.
\end{equation*}
Notice that the $\lambda_g$-pairing on $R^*(M_g^c)$ vanishes above
degree $2g - 3$ whereas the $\lambda_g\lambda_{g - 1}$-pairing on
$R^*(M_g)$ already vanishes above degree $g - 2$.

\subsection{Notation concerning partitions}\label{sec:part}

In the following sections we will use the following notation
heavily. A \emph{partition} $\sigma$ is an unordered collection of
natural numbers (a multiset). We call its elements \emph{parts}. Its
\emph{size} is the sum of all its parts. The \emph{length}
$\ell(\sigma)$ of a partition $\sigma$ is the number of parts in
it. For natural numbers $n$ and $r$, we denote by $P(n)$ the set of
partitions of size $n$ and by $P(n, r)$ the set of partitions of size
$n$ and length at most $r$. Furthermore, let $I(\sigma)$ be a set of
$\ell(\sigma)$ elements which we will use to index the parts of
$\sigma$. For example we could take
\begin{equation*}
  I(\sigma) = [\ell(\sigma)] := \{1, \dots, \ell(\sigma)\}.
\end{equation*}
For two partitions $\sigma, \tau \in P(n)$ and a map $\varphi\colon
I(\sigma) \to I(\tau)$ we say that $\varphi$ is a \emph{refining
  function} of $\tau$ into $\sigma$ if for any $i \in I(\tau)$ we have
\begin{equation*}
  \tau_i = \sum_{j \in \varphi^{-1}(i)}\sigma_j.
\end{equation*}
If for given $\sigma$ and $\tau$ there exists a refining function
$\varphi$ of $\tau$ into $\sigma$, we say that $\sigma$ is a
refinement of $\tau$.

For a finite set $S$, a \emph{set~partition} $P$ of $S$ (written $P
\vdash S$) is a set $P = \{S_1, \dots, S_m\}$ of nonempty subsets of
$S$ such that $S$ is the disjoint union of the $S_i$.

For a partition $\sigma$ and a set $S$ of subsets of $I(\sigma)$ we
define a new partition $\sigma^S$ indexed by the elements of $S$ by
setting $(\sigma^S)_s = \sum_{i \in s} \sigma_i$ for each $s \in
S$. Usually we will take a set partition $P$ of $I(\sigma)$ for
$S$. For a subset $T \subseteq I(\sigma)$, we define the
\emph{restriction $\sigma|_T$ of $\sigma$ to $T$} by $\sigma^S$, where
$S$ is the set of all 1-element subsets of $T$; in other words,
$\sigma|_T = (\sigma_t)_{t \in T}$.

\subsection{Integral calculations}

The basic formula for the evaluation of the integrals arising in the
$\lambda_g$-pairing is (see \cite{MR1954265})
\begin{equation*}
  \int_{\overline M_{g, n}} \prod_{i = 1}^n \psi_i^{\tau_i} \lambda_g = \binom{2g - 3 + n}\tau \int_{\overline M_{g, 1}} \psi_1^{2g - 2} \lambda_g,
\end{equation*}
where $\tau_1, \dotsc, \tau_n$ are nonnegative integer numbers with
sum $2g - 3 + n$. The formula is symmetric with respect to the sorting
of the markings and hence we only need to know the partition
corresponding to $\tau$ in order to calculate these integrals.
Since we are only interested in the ranks of the pairing, the only
thing we will need to know about the integral on the right hand side
is that it is nonzero (see \cite{MR1728879}).

We will need to evaluate integrals involving $\psi$-classes as part of
the proof of the housing theorem. However our main interest lies in
the calculation of integrals involving $\kappa$-classes. Using the
definition of the $\kappa$-classes as push-forwards of powers of
$\psi$-classes, we can find a nice expression for the quotients
\begin{equation*}
  \vartheta(\sigma; \tau) := \left(\int_{\overline M_{g, \ell(\tau)}} \kappa_\sigma\psi^\tau \lambda_g\right) \left(\int_{\overline M_{g, 1}} \psi_1^{2g - 2} \lambda_g\right)^{-1}.
\end{equation*}
In this equation we have used $\kappa_\sigma$ as an abbreviation for
$\prod_{i \in I(\sigma)} \kappa_{\sigma_i}$ and $\psi^\tau$ for
$\prod_{i \in I(\tau)} \psi_i^{\tau_i}$ indexing the $|\tau|$ marked
points by the parts of $\tau$.
We will write
\begin{equation*}
  \vartheta(\sigma) := \vartheta(\sigma; \emptyset)
\end{equation*}
when we just have $\kappa$-classes and no $\psi$-classes.

\begin{lem}\label{lem:kappapsi}
  For partitions $\sigma$ and $\tau$ such that $2g - 3 + \ell(\tau) =
  |\sigma| + |\tau|$, we have
  \begin{equation*}
    \vartheta(\sigma; \tau) = \sum_{P \vdash I(\sigma)} (-1)^{|P| + \ell(\sigma)} \binom{2g - 3 + |P| + \ell(\tau)}{((\sigma^P)_i + 1)_{i \in P}, \tau}.
  \end{equation*}
\end{lem}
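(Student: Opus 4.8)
The plan is to reduce the computation of $\vartheta(\sigma;\tau)$ to the basic $\psi$-monomial formula by rewriting the product of $\kappa$~classes as a signed sum of pushforwards of $\psi$~powers indexed by set partitions of $I(\sigma)$. First I would record three facts about a forgetful map $\pi\colon\overline{M}_{g,n+1}\to\overline{M}_{g,n}$: that the Hodge class pulls back, $\pi^*\lambda_g=\lambda_g$; that $\pi^*\kappa_a=\kappa_a-\psi_{n+1}^a$; and that $\psi_{n+1}$ restricts to $0$ on each divisor $D_{j,n+1}$ where the new point collides with an old one, so $\psi_{n+1}^m\,D_{j,n+1}=0$ for all $m\ge 1$. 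Since $\pi^*\psi_j=\psi_j-D_{j,n+1}$, this last fact yields the vanishing trick: multiplying any pullback by a positive power of $\psi_{n+1}$ kills all boundary corrections, so that $\pi^*(\psi^\tau)\,\psi_{n+1}^m=\psi^\tau\psi_{n+1}^m$ for $m\ge 1$, and similarly for powers of earlier-added points, whose classes are only ever multiplied by a positive power of the newest point.

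The main argument is an induction on $\ell(\sigma)$, peeling off one $\kappa$~class at a time. Singling out an index $k\in I(\sigma)$, writing $\kappa_{\sigma_k}=\pi_*\psi_{n+1}^{\sigma_k+1}$, and applying the projection formula together with $\pi^*\lambda_g=\lambda_g$, the relation for $\pi^*\kappa_a$, and the vanishing trick gives
\[ \vartheta(\sigma;\tau)=\sum_{T\subseteq I(\sigma)\setminus\{k\}}(-1)^{|T|}\,\vartheta\Bigl(\sigma|_{I(\sigma)\setminus(\{k\}\cup T)};\ \tau\cup\{\,1+\textstyle\sum_{i\in\{k\}\cup T}\sigma_i\,\}\Bigr), \]
where the subset $T$ records which of the remaining $\kappa$~classes are merged with $\kappa_{\sigma_k}$ onto the new point. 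Each smaller instance again satisfies the hypothesis of the lemma, so I would invoke the inductive hypothesis on every summand; the block containing $k$ is $\{k\}\cup T$, and always singling out the largest index exhibits each set partition $P$ of $I(\sigma)$ exactly once. The surviving $\psi$-exponent attached to a block $B$ is $1+\sum_{i\in B}\sigma_i=(\sigma^P)_B+1$, so once all $\kappa$~classes are peeled the remaining pure-$\psi$ integral on $\overline{M}_{g,\ell(\tau)+|P|}$ evaluates, via the basic formula, to $\binom{2g-3+|P|+\ell(\tau)}{((\sigma^P)_i+1)_{i\in P},\,\tau}$; the hypotheses of that formula hold precisely because $|\sigma|+|\tau|=2g-3+\ell(\tau)$.

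It remains to assemble the signs. Each block $B$ contributes a factor $(-1)^{|B|-1}$, so $\prod_{B\in P}(-1)^{|B|-1}=(-1)^{\ell(\sigma)-|P|}=(-1)^{|P|+\ell(\sigma)}$, matching the claimed sign; concretely, in the inductive step the prefactor $(-1)^{|T|}$ combined with the hypothesis's factor $(-1)^{|P'|+(\ell(\sigma)-1-|T|)}$ for the partition $P'=P\setminus\{\{k\}\cup T\}$ collapses to $(-1)^{|P|+\ell(\sigma)}$. The base case $\ell(\sigma)=0$ is the basic formula itself. I expect the main obstacle to be bookkeeping rather than geometric: verifying that the boundary corrections vanish uniformly at every stage of the induction (including for points added at earlier stages, whose $\psi$ powers have already accumulated), and checking that the recursive peeling genuinely sets up a bijection with set partitions, so that each $P$ is produced exactly once with the correct sign and the correct multinomial weight.
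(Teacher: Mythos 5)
Your proposal is correct, and it takes a genuinely different route from the paper's. The paper reduces the lemma to the Chow identity $\kappa_\sigma\psi^\tau=\sum_{P}(-1)^{|P|+\ell(\sigma)}\pi_*\bigl(\psi^{((\sigma^P)_i+1)_{i\in P}}\psi^\tau\bigr)$ and obtains it by citing the all-at-once forgetful pushforward formula $\pi_*\bigl(\psi^{(\sigma_i+1)_{i}}\psi^\tau\bigr)=\sum_{P\vdash I(\sigma)}\prod_{S\in P}(|S|-1)!\,\kappa_{\sigma^P}\psi^\tau$ and then inverting it (M\"obius inversion on the partition lattice, where the $(|S|-1)!$ weights must cancel against the M\"obius function $\prod_{S}(-1)^{|S|-1}(|S|-1)!$). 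You instead peel off one $\kappa$ class per forgetful map, using $\pi^*\kappa_a=\kappa_a-\psi_{n+1}^a$, $\pi^*\lambda_g=\lambda_g$, and the vanishing $\psi_{n+1}D_{j,n+1}=0$; expanding $\prod_{i\neq k}(\kappa_{\sigma_i}-\psi_{n+1}^{\sigma_i})$ produces the recursion over subsets $T$, and the blocks of $P$ are assembled as the sets $\{k\}\cup T$ merged at each stage. Your bookkeeping is sound: the dimension hypothesis $|\sigma'|+|\tau'|=2g-3+\ell(\tau')$ is preserved at each step, the map $(T,P')\mapsto P'\cup\{\{k\}\cup T\}$ (with $k$ fixed, say maximal) is a genuine bijection onto set partitions of $I(\sigma)$, the sign $(-1)^{|T|}(-1)^{|P|-1+\ell(\sigma)-1-|T|}=(-1)^{|P|+\ell(\sigma)}$ matches, and the boundary corrections to pulled-back $\psi$ powers of earlier points always die because each new point carries exponent $1+\sum_{i\in\{k\}\cup T}\sigma_i\geq 1$. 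What your approach buys is self-containedness: the signed sum over set partitions and its signs fall out of the expansion directly, with no need for the $(|S|-1)!$ combinatorics or the inversion step; the paper's version is shorter on the page but outsources exactly that computation to ``partition refinement inversion.''
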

\begin{proof}
  From the basic socle evaluation formula we see that it suffices to
  prove the identity
  \begin{equation*}
    \kappa_\sigma \psi^\tau \lambda_g = \sum_{P \vdash I(\sigma)} (-1)^{|P| + \ell(\sigma)} \pi_*\left(\psi^{((\sigma^P)_i + 1)_{i \in P}}\psi^\tau \lambda_g\right).
  \end{equation*}
  in the Chow ring $R(\overline M_{g, \ell(\tau)})$, where by abuse of
  notation $\pi$ is the forgetful map $\overline M_{g, \ell(\tau) + n}
  \to \overline M_{g, \ell(\tau)}$ for the appropriate $n$. Since
  $\pi^*(\lambda_g) = \lambda_g$, we can further reduce to
  \begin{equation*}
    \kappa_\sigma \psi^\tau = \sum_{P \vdash I(\sigma)} (-1)^{|P| + \ell(\sigma)} \pi_*\left(\psi^{((\sigma^P)_i + 1)_{i \in P}}\psi^\tau \right).
  \end{equation*}
  
  This follows from the pushforward formula
  \begin{equation*}
    \pi_*\left(\psi^{(\sigma_i + 1)_{i \in P}}\psi^\tau \right) = \sum_{P \vdash I(\sigma)}\left(\prod_{S\in P}(|S|-1)!\right) \kappa_{\sigma^P}\psi^\tau.
  \end{equation*}
  and partition refinement inversion.
  \hfill $\Box$
\end{proof}

To evaluate the more general integrals which arise when we pair
$\kappa$-classes with arbitrary tautological classes, we can restrict
ourselves to pairing a $\kappa$-monomial with the additive set of
generators described in Section~\ref{sec:generators}. In this case, we
have to sum over the set of possible distributions of the
$\kappa$-classes to the vertices of $\Gamma$ and then multiply the
$\lambda_g$-integrals at each vertex.

The $\lambda_g\lambda_{g - 1}$-pairing formula is similar:
\begin{equation*}
  \int_{\overline M_{g, n}} \psi^\sigma\lambda_g\lambda_{g - 1} = \frac{(2g - 3 + \ell(\sigma))!(2g - 1)!!}{(2g - 1)!\prod_{i \in I(p)}(2\sigma_i + 1)!!}\int_{\overline M_g} \psi^{g - 2}\lambda_g\lambda_{g - 1}.
\end{equation*}
The integral on the right hand side is known to be nonzero (see
\cite{MR1653492}). We can calculate the $\kappa$-integrals analogously
to Lemma~\ref{lem:kappapsi}.

\section{The Housing Theorem}\label{sec:housing}

\subsection{Housing Partitions}\label{sec:housepart}

Let us now study pairing $\kappa$-monomials of degree $d$ with pure
boundary classes via the $\lambda_g$-pairing. Each pure boundary
stratum in codimension $2g - 3 - d$ is determined by a tree $\Gamma =
(V, E)$ with $|V| = 2g - 2 - d$ vertices and $|E| = 2g - 3 - d$ edges,
and a genus function $g\colon V \to \mathbb Z_{\ge 0}$ with $\sum_{v \in V}
g(v) = g$. Then, the class is the push-forward of $1$ along the gluing
map $\xi_\Gamma\colon\prod_{v \in V} M_{g(v), n(v)}^c \to M_g^c$
corresponding to the tree $\Gamma$, where $n(v)$ is the degree of the
vertex $v$. From this data, we obtain a partition of
\begin{align*}
  \sum_{v \in V} (2g(v) - 3 + n(v)) &= 2g - 3(2g - 2 - d) + 2(2g - 3 - d) \\
  &= d
\end{align*}
by collecting the socle dimensions $2g(v) - 3 + n(v)$ for each vertex
$v \in V$ and throwing away the zeroes. We will call this partition
the \emph{housing data} of the pure boundary stratum. From the
$\lambda_g$ formula, it is easy to see that the pairing of the
$\kappa$-ring with a pure boundary stratum is determined by its
housing data.

On the other hand, it is interesting to consider which partitions of
$d$ can arise as housing data corresponding to a pure boundary
stratum. We will call these partitions \emph{housing partitions}.
\begin{lem}\label{lem:housepart}
  A partition $\sigma$ of $d$ is a housing partition if and only if it
  either has fewer than $2g - 2 - d$ parts or exactly $2g - 2 - d$
  parts, at least two of which are even.
\end{lem}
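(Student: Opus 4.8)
The plan is to reformulate the existence of a pure boundary stratum with prescribed housing data as a purely combinatorial feasibility question about tree degree sequences, and then to match the resulting numerical condition with the statement. Write $N = 2g - 2 - d$ for the number of vertices and, for a vertex $v$, abbreviate its socle dimension by $s(v) = 2g(v) - 3 + n(v)$. The first observation is that the global data is essentially free: if we fix a tree on $N$ vertices together with target socle dimensions $s(v)$, then setting $g(v) = (s(v) + 3 - n(v))/2$ is forced, and this is a nonnegative integer giving a stable factor $M^c_{g(v), n(v)}$ exactly when $n(v) \equiv s(v) + 1 \pmod 2$ and $1 \le n(v) \le s(v) + 3$; stability is automatic here because $2g(v) - 2 + n(v) = s(v) + 1 \ge 1$. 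Moreover $\sum_v g(v) = g$ and $\sum_v n(v) = 2(N - 1)$ hold automatically from the counts already recorded in the definition of housing data. Thus $\sigma$ is a housing partition if and only if there is a tree on $N$ vertices whose degrees $n(v) \ge 1$ realize the multiset of socle targets (the parts of $\sigma$ together with $N - \ell(\sigma)$ zeros, so in particular $\ell(\sigma) \le N$, without which $\sigma$ is immediately neither a housing partition nor allowed by the stated condition) subject to the two local constraints above.

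Next I would remove the tree from the picture. Since every sequence of positive integers summing to $2(N - 1)$ is the degree sequence of some tree on $N$ vertices, the problem becomes: can we choose integers $n(v) \ge 1$ with $\sum_v n(v) = 2(N - 1)$ meeting the per-vertex parity and upper bound? For a fixed socle target $s$, the admissible degrees form an arithmetic progression of step $2$ running from $n_{\min}(s)$, which equals $1$ if $s$ is even and $2$ if $s$ is odd, up to $s + 3$. Because each $n(v)$ can be adjusted independently in steps of two, such a choice exists precisely when $\sum_v n_{\min}(s(v)) \le 2(N - 1) \le \sum_v (s(v) + 3)$ and $2(N-1)$ has the correct parity. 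The upper bound $2(N-1) \le d + 3N$ is trivially true, and the parity check holds automatically once one uses $d \equiv N \pmod 2$, so the only genuine constraint is the lower bound.

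The lower bound is a clean count: writing $o$ for the number of odd parts of $\sigma$, one has $\sum_v n_{\min}(s(v)) = N + o$, so the condition is $o \le N - 2$. This gives both directions at once: for an actual stratum, $n(v) \ge n_{\min}(s(v))$ forces $2(N-1) = \sum_v n(v) \ge N + o$, while conversely $o \le N - 2$ lets us select admissible degrees, realize them by a tree, and read off the genera. It remains to translate $o \le N - 2$ into the stated form. Here I would use $o \le \ell(\sigma) \le N$ together with $o \equiv N \pmod 2$ (again from $d \equiv N$), so that $o \le N - 2$ is equivalent to $o \ne N$; and $o = N$ happens exactly when $\sigma$ has length $N$ with no even parts. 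Finally, when $\ell(\sigma) = N$ the number of even parts $e = N - o$ is even, so excluding $e = 0$ is the same as requiring $e \ge 2$, which recovers the statement.

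The pleasant surprise is that the total-genus and stability conditions impose nothing, so the whole lemma collapses to the single inequality $\sum_v n_{\min}(s(v)) \le 2(N-1)$; the main work is therefore the bookkeeping of the last paragraph, namely identifying $\sum_v n_{\min}(s(v))$ with $N + o$ and using the ambient parity $d \equiv N \pmod 2$ to convert the bound into the even-parts criterion. The one external input I would want to invoke cleanly is the standard fact that any positive integer sequence summing to $2(N-1)$ is realized by a tree, and I expect the genuine (if minor) obstacle to be checking that the attainable values of $\sum_v n(v)$ fill the entire parity-correct interval between the two bounds, i.e. that the per-vertex progressions really do combine to give every intermediate sum.
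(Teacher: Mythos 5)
Your proof is correct, and it shares its core reduction with the paper's: in both arguments one fixes the target socle dimensions $s(v)$ (the parts of $\sigma$ padded with zeros), observes that the genus assignment $g(v)=(s(v)+3-n(v))/2$ is forced and is admissible exactly when $n(v)$ has the right parity and satisfies $n(v)\le s(v)+3$, and notes that the leaves of the tree are what force even parts. Where you diverge is in the existence step. The paper constructs an explicit caterpillar tree (a path with $k$ extra leaves attached) engineered to have exactly $2k+2$ vertices of odd degree, matching the $2k+2$ even entries of the padded tuple, and then assigns genera. You instead abstract the tree away entirely via the standard fact that any positive integer sequence summing to $2(N-1)$ is the degree sequence of a tree on $N$ vertices, reducing the whole lemma to the single inequality $\sum_v n_{\min}(s(v))=N+o\le 2(N-1)$, i.e.\ $o\le N-2$, whose translation into the even-parts criterion you carry out correctly using $N\equiv d\equiv o\pmod 2$. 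Your route buys a uniform treatment of both directions of the equivalence (the forward implication is the same inequality read backwards) and makes transparent that the total-genus and stability constraints are vacuous; the paper's explicit construction is self-contained and avoids invoking degree-sequence realizability and the small verification that independent step-two progressions fill out the whole parity-correct interval of sums. Both arguments leave the degenerate case $N=1$ implicit, which is harmless since the lemma's condition is then vacuously unsatisfiable.
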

\begin{proof}
  Only partitions of length at most $2g - 2 - d$ can be housing
  partitions because there are only that many vertices. Furthermore, it
  is easy to see that no partition of $2g - 2 - d$ parts with fewer
  than two even parts can arise since every vertex with only one edge
  gives an even part (or no part if $g(v) = 1$).

  Now suppose $\sigma$ is a partition of $d$ with either fewer than
  $2g-2-d$ parts or exactly $2g-2-d$ parts with at least two even. Let
  $(\tau_i)_{1\le i\le 2g-2-d}$ be the tuple of nonnegative integers
  given by appending $2g-2-d-\ell(\sigma)$ zeroes to $\sigma$, so that
  the sum of the $\tau_i$ is $d$, and exactly $2k+2$ of the $\tau_i$
  are even for some nonnegative integer $k$.

  Construct a tree $\Gamma$ by taking a path of $2g-2-d-k$ vertices
  and adding $k$ additional leaves connected to vertices
  $2,3,\ldots,k+1$ along the path respectively. Thus, $\Gamma$ has
  $2g-2-d$ vertices, each of degree at most three, and exactly $2k+2$
  of the vertices of $\Gamma$ have odd degree. We now choose a
  bijection between the $\tau_i$ and the vertices of $\Gamma$ such
  that even $\tau_i$ are assigned to vertices of odd degree. We can
  then assign a genus $g_i = (\tau_i+3-n_i)/2$ to each vertex, where
  $n_i$ is the degree of the vertex to which $\tau_i$ was
  assigned. The resulting stable tree has housing data $\sigma$, as
  desired.
  \hfill $\Box$
\end{proof}

\subsection{Reduction to a combinatorial problem}\label{sec:reduction}

We have already described the housing data of a pure boundary
stratum. Let us now describe a similar notion for any class in the
generating set described in Section~\ref{sec:generators}.  Such a
class is given by a boundary stratum corresponding to a tree $\Gamma =
(V, E)$ and a genus assignment $g\colon V \to \mathbb Z_{\ge 0}$, along
with assignments of monomials in $\kappa$- and $\psi$-classes (of
degrees $r(v)$ and $s(v)$ respectively) to each component of the
stratum. Let $k = \sum_{v \in V} (r(v) + s(v))$; then we must have
$|E| = 2g - 3 - d - k$ edges in the tree in order to obtain a class of
degree $2g - 3 - d$. If this class does not vanish by dimension
reasons, then we can obtain a partition $\gamma$ of
\begin{equation*}
  \sum_{v \in V} (2g(v) - 3 + n(v) - r(v) - s(v)) = 2g - 3(2g - 2 - d - k) + 2(2g - 3 - d - k) - k
= d
\end{equation*}
by assigning to each vertex of $V$ the number $2g(v) - 3 + n(v) - r(v)
- s(v)$. This is exactly the degree $d'(v)$ such that the
$\lambda_{g(v)}$-pairing of $R^{d'(v)}(M_{g(v), n(v)}^c)$ with the
monomial of $\psi$- and $\kappa$-classes at $v$ is not zero for
dimension reasons. Then, the pairing with the boundary class is
determined by the partition $\gamma$, an assignment of degrees $r(i)$
and $s(i)$ to the parts $i \in I(\gamma)$ and partitions $\tau_i \in
P(r(i))$ and $\rho_i \in P(s(i))$ corresponding to the $\kappa$- and
$\psi$-monomials. In particular we can leave out classes which were
assigned to vertices with $2g(v) - 3 + n(v) - r(v) - s(v) = 0$ and we
do not need to remember which node corresponds to each $\psi$. The
result of the $\lambda_g$-pairing of this class together with a
$\kappa$-monomial corresponding to a partition $\pi$ of $d$ is (up to
scaling) given by
\begin{equation*}
  \sum_\varphi \prod_{j \in I(\gamma)} \vartheta\left(\pi_{\varphi^{-1}(j)}, \tau_j; \rho_j\right),
\end{equation*}
where the sum runs over all refining functions $\varphi$ of $\gamma$
into $\pi$.

When we view $\mathbb Q^{P(d)}$ as a ring of formal
$\kappa$-polynomials, this pairing gives linear forms
$$v_{\gamma, \{\tau_i\}, \{\rho_i\}} \in \left(\mathbb
  Q^{P(d)}\right)^*.$$
We notice that the formulas still make combinatorial sense even if the
triple $(\gamma, \{\tau_i\}, \{\rho_i\})$ does not come from pairing
with an actual tautological class.

The special case where all the $r(i)$ and $s(i)$ are zero gives the pairing of $\kappa$
classes with pure boundary classes. We get $|P(d)|$ linear forms
$M_\lambda$, which we normalize such that $M_\lambda(\lambda) = 1$:
\begin{equation}\label{eq:M_lambda}
  M_\lambda(\pi) = \frac 1 {\mathrm{Aut}(\lambda)} \sum_\varphi \prod_{j \in I(\lambda)}\vartheta \left(\pi_{\varphi^{-1}(j)}\right).
\end{equation}
In this way we obtain a basis of $\left(\mathbb Q^{P(d)}\right)^*$. If
we sort partitions in any way such that shorter partitions come before
longer partitions, then the basis change matrix from this basis to the
standard basis is triangular with ones on the diagonal. Note that this
basis uses some partitions which are not housing partitions.

The housing theorem can now be reformulated as follows:

\bigskip
\noindent
{\bf Claim.}
  {\em The span of $\{M_\lambda : \lambda\text{ is a housing partition}\}$
  in $\left(\mathbb Q^{P(d)}\right)^*$ equals the span of the
  $v_{\gamma, \{\tau_i\}, \{\rho_i\}}$ for all choices of housing
  data.}

\bigskip

To prove this claim, we will first in Section~\ref{sec:matrix} express
the vectors $v_{\gamma,
  \{\tau_i\}, \{\rho_i\}}$ for any choice of housing data in terms of
the basis of $\left(\mathbb Q^{P(d)}\right)^*$ we have described above.
We will then in Section~\ref{sec:reinter}
rewrite the coefficients as counts of certain combinatorial
objects. This combinatorial interpretation is proved in
Section~\ref{sec:mainclm}. We conclude in Section~\ref{sec:second} by
showing that when expressing vectors $v$ corresponding to actual
housing data in terms of the $M_\lambda$, the coefficient is zero
whenever $\lambda$ is not a housing partition.

\subsection{A Matrix Inversion}\label{sec:matrix}

In Section~\ref{sec:reduction} we have seen that there are formal
expansions
\begin{equation*}
  v_{\gamma, \{\tau_i\}, \{\rho_i\}} = \sum_{\lambda \in P(d)} c_{\lambda, \gamma, \{\tau_i\}, \{\rho_i\}} M_\lambda
\end{equation*}
for some coefficients $c_{\lambda, \gamma, \{\tau_i\},
  \{\rho_i\}}$.

We can calculate $c_{\lambda, \gamma, \{\tau_i\}, \{\rho_i\}}$
explicitly by inverting the triangular matrix given by equation
\eqref{eq:M_lambda}. We obtain
\begin{align*}
  c_{\lambda, \gamma, \{\tau_i\}, \{\rho_i\}} = \sum_{l = 0}^\infty
  (-1)^l \sum_ {\lambda_0 \stackrel{\varphi_1}{\to} \dots
    \stackrel{\varphi_l}{\to} \lambda_l \stackrel{\varphi_{l +
        1}}{\to} \gamma} \frac{v_{\gamma, \{\tau_i\}, \{\rho_i\}}
    (\lambda_l)}{\prod_{i = 1}^l |\mathrm{Aut}(\lambda_i)|}
  \prod_{i = 1}^l \prod_{j \in I(\lambda_i)} \vartheta\left((\lambda_{i -
      1})_{\varphi_i^{-1}(j)}\right),
\end{align*}
where we sum over chains $\lambda = \lambda_0, \dotsc \lambda_l$ of
refinements of $\gamma$ with corresponding refining functions
$\varphi_i$. In particular, $c_{\lambda, \gamma, \{\tau_i\},
  \{\rho_i\}} = 0$ if $\lambda$ is not a refinement of $\gamma$.

We can reduce to the special case in which $\gamma = (d)$ is of length
one by splitting this sum based on the composition $\varphi :=
\varphi_{l + 1} \circ \varphi_l \circ \dots \circ \varphi_1$ and
examining the contribution of the preimages of the $j \in
I(\gamma)$. The result is
\begin{equation}\label{eq:general}
    c_{\lambda, \gamma, \{\tau_i\}, \{\rho_i\}} = \sum_\varphi \prod_{j
      \in I(\gamma)} c_{\lambda_{\varphi^{-1}(j)}, (\gamma_j), \{\tau_j\},
      \{\rho_j\}},
\end{equation}
summed over refining functions $\varphi$ of $\gamma$ into $\lambda$.

When $\gamma = (d)$, we set $\tau_1 =: \tau$ and $\rho_1 =: \rho$, and
we can write more compactly:
\begin{equation} \label{eq:sum}
  c_{\lambda,(d),\{\tau\}, \{\rho\}} = \sum_{l = 0}^\infty (-1)^l \sum_{\lambda_0 \stackrel{\varphi_1}{\to} \dots \stackrel{\varphi_l}{\to} \lambda_l} \frac{\vartheta(\lambda_l, \tau; \rho)}{\prod_{i = 1}^l |\mathrm{Aut}(\lambda_i)|} \prod_{i = 1}^l \prod_{j \in I(\lambda_i)} \vartheta\left((\lambda_{i - 1})_{\varphi_i^{-1}(j)}\right)
\end{equation}

\subsection{Interpreting the coefficients combinatorially}\label{sec:reinter}

We will interpret the coefficients $c_{\lambda, (d), \{\tau\},
  \{\rho\}}$ as counting certain permutations of symbols labeled by
the parts of the partitions $\lambda,\tau$, and $\rho$. We say that a
symbol is \emph{of kind $i$} if it is labelled by some $i$ belonging
to the disjoint union of the indexing sets of the partitions,
$I(\lambda)\sqcup I(\tau)\sqcup I(\rho)$. There will in general be
multiple symbols of a given kind.

\bigskip

\noindent
{\bf Main Claim.}
{\em  The coefficient $c_{\lambda, (d), \{\tau\}, \{\rho\}}$ counts the
  number of permutations of
  \begin{itemize}
    \item $\lambda_i + 1$ symbols of kind $i$ for each $i \in I(\lambda)$,
    \item $\tau_i + 1$ symbols of kind $i$ for each $i \in I(\tau)$, and
    \item $\rho_i$ symbols of kind $i$ for each $i \in I(\rho)$
  \end{itemize}
  such that:
  \begin{enumerate}
  \item[\rm (1)] \phantomsection\label{cond:11} If the last symbol of some kind $i$ is
    immediately followed by the first symbol of kind $j$ with $i, j
    \in I(\lambda)\sqcup I(\tau)$, then we have $i < j$.
  \item[\rm (2)] \phantomsection\label{cond:12} For $i \in I(\lambda)$, the last symbol of
    kind $i$ is not immediately followed by a symbol of kind $j$ for
    any $j \in I(\lambda)$,
  \end{enumerate}
  averaged over all total orders $<$ of $I(\lambda)\sqcup I(\tau)$
  such that elements of $I(\tau)$ are smaller than elements of
  $I(\lambda)$.
}

\bigskip

It follows in particular that the coefficient $c_{\lambda, \gamma,
  \{\tau_i\}, \{\rho_i\}}$ is non-negative.

\subsection{Proof of the Main Claim}\label{sec:mainclm}

\subsubsection{Refinements of permutations of symbols}\label{sec:symperm}

For given natural numbers $d$, $n$ and a partition $\tau \in P(d)$, we
will study permutations $S$ of $\tau_i + 1$ symbols of kind $i$ for
$i \in I(\tau)$ and $n$ symbols of kind $c$.
(The permutations of symbols appearing in the previous section are an
instance of this.)
We will need to construct refined permutations of this type for
partition refinements $\varphi\colon I(\sigma) \to I(\tau)$.
For this, we need additional \emph{refinement data}: for each
$i \in I(\tau)$, let $T_i$ be a permutation of $\sigma_j + 1$ symbols
of kind $j$ for $j \in \varphi^{-1}(i)$.

Given $S$ and the refinement data, we can obtain a permutation $S'$ of
$\sigma_i + 1$ symbols of kind $i$ and $n$ symbols of kind $c$ in the
following way: For each $i \in I(\tau)$ and each
$j \in \varphi^{-1}(i)$, modify $T_i$ by gluing the last symbol of
kind $j$ with the immediately following symbol; the result is a
permutation $T'_i$ of $\tau_i + 1$ symbols. To construct $S'$ from
$S$, for each $i$ we replace the symbols of kind $i$ by $T'_i$ and
then remove the glue.

\subsubsection{Reinterpretation}\label{sec:reinter2}

We start with a combinatorial interpretation of the number
$\vartheta(\sigma; \tau)$ for partitions $\sigma$ and $\tau$.

\begin{lem} \label{lem:tool}
  Given an arbitrary total order $<$ on $I(\sigma)$,\,the number $\vartheta(\sigma; \tau)$ is equal to the number of permutations\,of
  \begin{itemize}
  \item $\sigma_i + 1$ symbols of kind $i$ for each $i \in I(\sigma)$
    and
  \item $\tau_i$ symbols of kind $i$ for each $i \in I(\tau)$
  \end{itemize}
  such that the following property holds:

  If the last symbol of kind $i$ is immediately followed by the first
  symbol of kind $j$ for $i, j \in I(\sigma)$, then we have $i < j$.
\end{lem}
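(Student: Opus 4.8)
The plan is to prove the identity by reading the set-partition formula of Lemma~\ref{lem:kappapsi} as an inclusion--exclusion governed by the adjacency condition. First I would fix the total order $<$ and call a permutation \emph{good} if it satisfies the stated property. Call an adjacency \emph{bad} if the last symbol of some kind $i\in I(\sigma)$ is immediately followed by the first symbol of some kind $j\in I(\sigma)$ with $i>j$, and let $B(w)$ denote the set of bad adjacencies of a permutation $w$; since no symbol can simultaneously be the last of kind $i$ and precede the first of kind $i$, the case $i=j$ never occurs, so good permutations are exactly those with $B(w)=\emptyset$. The goal is thus to count permutations with no bad adjacency and match the count to $\vartheta(\sigma;\tau)$.

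The central observation is that in any permutation the bad adjacencies form vertex-disjoint directed chains on $I(\sigma)$ along which the labels strictly decrease: each kind has a unique first and last symbol, so it is the source and the target of at most one adjacency, and positions increase along a chain, so there are no cycles. Hence a set $F$ of bad adjacencies that can occur simultaneously is precisely a disjoint union of decreasing chains, and each such $F$ determines a set partition $P(F)$ of $I(\sigma)$, namely its chains together with the remaining singletons. Conversely, since a block of size $k$ admits a \emph{unique} decreasing Hamiltonian chain (its decreasing sort), every set partition $P$ arises from exactly one realizable $F$, and that $F$ has $|F|=\ell(\sigma)-|P|$ edges.

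The next and key step is the bijection underlying the gluing construction of Section~\ref{sec:symperm}: for a realizable $F$ with $P=P(F)$, the permutations in which all adjacencies of $F$ are present are counted by $\binom{2g-3+|P|+\ell(\tau)}{((\sigma^P)_s+1)_{s\in P},\,\tau}$. Forcing the decreasing chain $i_1\to\dots\to i_k$ inside a block $s$ locks the corresponding runs into a single left-to-right ordered unit; merging each of the $k-1$ tight junction pairs turns the $\sum_m(\sigma_{i_m}+1)$ fine symbols of $s$ into $(\sigma^P)_s+1$ indistinguishable block symbols, while the $\tau$ symbols and the symbols of other blocks may be interleaved arbitrarily. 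Expanding a block symbol back to its fine kind is deterministic, because within $s$ the kinds occur in decreasing order with prescribed multiplicities, so contraction and expansion are mutually inverse and the multinomial coefficient is exactly the desired count.

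Finally, inclusion--exclusion over the potential bad adjacencies, in which non-realizable $F$ contribute zero and realizable ones are grouped by $P=P(F)$, yields
\begin{align*}
  \#\{\text{good permutations}\} &= \sum_{F} (-1)^{|F|}\,\#\{w : F\subseteq B(w)\} \\
  &= \sum_{P\vdash I(\sigma)} (-1)^{\ell(\sigma)-|P|}\binom{2g-3+|P|+\ell(\tau)}{((\sigma^P)_s+1)_{s\in P},\,\tau}.
\end{align*}
Since $(-1)^{\ell(\sigma)-|P|}=(-1)^{\ell(\sigma)+|P|}$, the right-hand side is precisely the expression for $\vartheta(\sigma;\tau)$ from Lemma~\ref{lem:kappapsi}, which proves the lemma; in particular the count is independent of the chosen order $<$, although this is not apparent a priori. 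I expect the main obstacle to be the careful verification of the gluing bijection in the presence of the $\tau$ symbols, specifically checking that a $\tau$ symbol may appear among the symbols of a block but never inside one of its tight junctions, so that contraction and expansion are genuinely inverse and the count is exactly the stated multinomial.
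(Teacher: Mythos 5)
Your proof is correct and follows essentially the same route as the paper's: both identify the multinomial attached to a set partition $P$ in Lemma~\ref{lem:kappapsi} with the number of permutations in which the chain adjacencies determined by $P$ are forced (via the gluing bijection of Section~\ref{sec:symperm}), and then cancel signs by inclusion--exclusion. You organize the cancellation over sets of forbidden adjacencies rather than over the failure set partition $Q_S$ of each individual permutation, and you use decreasing chains where the paper's proof uses increasing ones (your convention matches the inequality in the lemma statement more literally; the two counts agree by order-independence), but these are only bookkeeping differences.
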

\begin{proof}
  For each permutation $S$ of symbols as above, but not necessarily
  satifying the property, we can assign a set partition $Q_S \vdash
  I(\sigma)$ which measures in what ways it fails to satisfy the
  property: $Q_S$ is the finest set partition such that if $i > j$ and
  the last symbol of kind $i$ is immediately followed by the first
  symbol of kind $j$ in $S$, then $i$ and $j$ are in the same part of
  $Q_S$. Thus $S$ satisfies the given property if and only if $Q_S$ is
  the set partition with all parts of size $1$.

  The multinomial coefficient in the summand in the formula for
  $\vartheta(\sigma; \tau)$ given by Lemma~\ref{lem:kappapsi}
  corresponding to a set partition $P \vdash I(\sigma)$ counts the
  number of permutations $S$ such that for
  $p = \{p_1,\ldots,p_k\} \in P$ with $p_1 > \cdots > p_k$, the last
  symbol of kind $p_i$ is immediately followed by the first symbol of
  kind $p_{i+1}$ in $S$ for $i = 1,\cdots,k-1$.
  These are precisely the $S$ such that $Q_S$ can be obtained by
  combining parts of $P$ such that the largest element in one part is
  smaller than the smallest element of the other part.

  This means that if we split the formula for
  $\vartheta(\sigma; \tau)$ given by Lemma~\ref{lem:kappapsi} into a
  sum over permutations $S$, the contribution of a permutation with
  failure set partition $Q = \{Q_1,\ldots,Q_k\}$ is precisely
  \begin{equation*}
    \prod_{i=1}^k\sum_{j = 0}^{|Q_k|-1}(-1)^j\binom{|Q_k|-1}{j},
  \end{equation*}
  which is $1$ for $Q$ the set partition with all parts of size $1$,
  and $0$ otherwise.
  \hfill $\Box$
\end{proof}

Equipped with Lemma~\ref{lem:tool}, the next step in the proof of the
Main Claim is to split the coefficient
$c_{\lambda, (d), \{\tau\}, \{\rho\}}$ into a sum over the set
$S_{\lambda,\tau,\rho}$ of permutations of $\lambda_i + 1$
(respectively, $\tau_i + 1$, $\rho_i$) symbols of kind $i$ for
$i \in I(\lambda)$ (respectively, $i \in I(\tau)$, $i \in I(\rho)$).
For this, we introduce the notion of the \emph{composite permutation}.

Consider the following data:
\begin{itemize}
\item a chain of partitions $\lambda_0 = \lambda, \lambda_1, \dots,
  \lambda_l$ with refining maps $\varphi_i$ as in \eqref{eq:sum},
\item the additional data of an order $<$ on
  $I(\lambda_l)\sqcup I(\tau)$ such that elements of $I(\tau)$ appear
  before elements of $I(\lambda_l)$,
\item the additional data of orders on $\varphi_i^{-1}(j)$ for
  $1 \le i \le l$ and $j \in I(\lambda_i)$.
\end{itemize}

With this data, we identify each $\kappa$ socle evaluation factor
\begin{equation*}
  \vartheta\left((\lambda_{i - 1})_{\varphi_i^{-1}(j)}\right)
\end{equation*}
with the number of permutations of $(\lambda_{i - 1})_k + 1$ symbols
of kind $k \in \varphi_i^{-1}(j)$ such that if the last symbol of kind
$k$ is immediately followed by the first symbol of kind $k'$, then
$k < k'$.
We can interpret each such permutation as refinement data
corresponding to the refining function $\varphi_i$ of $\lambda_{i+1}$ into
$\lambda_{i}$.

Furthermore, we interpret the factor
\begin{equation*}
  \vartheta\left(\lambda_l, \tau; \rho\right)
\end{equation*}
as the number of permutations $S_l$ of $(\lambda_l)_k + 1$,
$\tau_k + 1$ and $\rho_k$ symbols of kind $k$ with
$k \in I(\lambda_l)$, $k \in I(\tau)$ and $k \in I(\rho)$,
respectively, such that if the last symbol of kind $k$ is immediately
followed by the first symbol of kind $k'$ for
$k, k' \in I(\lambda_l)\sqcup I(\tau)$, then $k < k'$.

Given all this data, we can build the \emph{composite permutation} by
repeatedly refining the collection of symbols of kind $k$ with
$k \in I(\lambda_l)$ using the construction from
Section~\ref{sec:symperm} and keeping the order of the other symbols
intact.
The result is a permutation of $\lambda_k + 1$, $\tau_k + 1$ and
$\rho_k$ symbols of kind $k$ for $k \in I(\lambda)$, $k \in I(\tau)$
and $k \in I(\rho)$ respectively.

Using the combinatorial interpretations of
$\vartheta\left((\lambda_{i - 1})_{\varphi_i^{-1}(j)}\right)$ and
$\vartheta\left(\lambda_l, \tau; \rho\right)$, and the notion of the
composite permutation, we may therefore write
$c_{\lambda, (d), \{\tau\}, \{\rho\}}$ as a sum over the set
$S_{\lambda,\tau,\rho}$.
To remove the dependence on the chosen orders, we will average over
all choices of them.

We note that any composite permutation has the property that the last
symbol of any kind $j \in I(\lambda)$ is not immediately followed by
the first symbol of some kind $j' \in I(\tau)$.
Also, with the natural induced ordering, any composite partition
satisfies condition (\hyperref[cond:11]{1}) in the Main Claim.

\subsubsection{Simplification}

For any permutation in $S_{\lambda,\tau,\rho}$, we assign a set
partition $P \vdash I(\lambda)$, which measures in what way it fails
to satisfy condition (\hyperref[cond:12]{2}) in the Main Claim. We define $P$
to be the finest set partition such that if the last symbol of kind
$i$ is immediately followed by a symbol of kind $j$ for $i, j \in
I(\lambda)$ then $i$ and $j$ lie in the same set of $P$.

Now, suppose we are given a chain of partitions
$\lambda, \lambda_1, \dots, \lambda_l$ along with additional refining
data, orderings and base permutation $S_l$ as above. Let $P$ be the failure set partition of the composite permutation. Let $\lambda_{l + 1} := \lambda^P$ be the partition formed by merging parts of $\lambda$ according to $P$, so there is a canonical refining function $I(\lambda) \to I(\lambda^P)$. By the construction of the composite permutation, this function actually factors through a refining function
$\varphi'\colon I(\lambda_l) \to I(\lambda_{l + 1})$.
Suppose that this refining function $\varphi'$ is nontrivial, i.e.
$\lambda_l \ne \lambda^P$.

We note that if we change the order on $I(\lambda_l)\sqcup I(\tau)$
such that the order on $I(\tau)$ and each inverse image of $\varphi'$
is preserved, the failure partition $P$ does not change.
We will therefore group these orderings together.

On the other hand, consider the following data:
\begin{itemize}
\item the chain
  $\lambda, \lambda_1, \dots, \lambda_l, \lambda_{l + 1}$ with
  refining maps $\varphi_i$ and $\varphi'$ as before,
\smallskip
\item the orders and refining data corresponding to the $\varphi_i$ as
  before,
\item in addition, an order on each preimage of $\varphi'$ which is
  induced by the order on $I(\lambda_l) \sqcup I(\tau)$,
\item refining data corresponding to $\varphi'$ induced from the
  permutation corresponding to $\lambda_l$, $\tau$ and $\rho$,
\item any order on $I(\lambda_{l + 1}) \sqcup I(\tau)$ such that the
  restriction to $I(\tau)$ is the restriction of the order on
  $I(\lambda_l) \sqcup I(\tau)$ and such that elements of $I(\tau)$
  appear before elements of $I(\lambda_{l + 1})$,
\smallskip
\item permutations of $(\lambda_{l + 1})_i + 1$, $\tau_i + 1$,
  $\rho_i$ symbols of kind $i$ for $i \in I(\lambda_{l + 1})$,
  $i \in I(\tau)$ and $i \in I(\rho)$ respectively, defined from the
  permutation corresponding to $\lambda_l$ by leaving out the last
  symbol of any kind $i \in I(\lambda_l)$ which is not the last one in
  a level set of $\varphi'$ and identifying symbols according to
  $\varphi'$.
\end{itemize}

It is easy to check that the refining data and the permutation still
satisfy the order conditions.
Furthermore, the failure set partition of the composite partition of
this new data is still $P$, so that now $\lambda_{l + 1} = \lambda^P$.

The original chain with additional data giving failure set partition
$P$ and the extended chain with additional data contribute to
$c_{\lambda,(d),\{\tau\},\{\rho\}}$ in formula \eqref{eq:sum} with
opposite signs since the extended chain is one element longer. We
claim these contributions cancel.

For the original chain, we have
\begin{equation*}
 \frac{(\ell(\lambda_l))!}{\prod_{j \in I(\lambda_{l + 1})}|\varphi'^{-1}(j)|!}
\end{equation*}
choices of orders on $I(\lambda_l) \sqcup I(\tau)$ in the above
construction. For the extended chain, we made
\begin{equation*}
  |\mathrm{Aut}(\lambda_{l + 1})| (\ell(\lambda_{l + 1}))!
\end{equation*}
choices in the above construction.

However, the contributions are also weighted by averaging over choices
of orders and by the automorphism factors in \eqref{eq:sum}.
For the original chain, the weight is
\begin{equation*}
  ((\ell(\lambda_l))!)^{-1},
\end{equation*}
and for the extended chain, the weight is
\begin{equation*}
  \left(|\mathrm{Aut}(\lambda_{l + 1})| (\ell(\lambda_{l + 1}))! \prod_{j \in I(\lambda_{l + 1})}|\varphi'^{-1}(j)|!\right)^{-1}.
\end{equation*}
Thus, the two contributions cancel.

The only remaining contributions occur when $l = 0$ and $P$ is the set
partition into one-element sets.
Since they are weighted with coefficient one, this finishes the proof
of the Main Claim.

\subsection{Proof of the Housing Theorem}\label{sec:second}

We begin with a simple lemma.
\begin{lem}\label{lem:van}
  Suppose $r + s + \ell(\gamma) < \ell(\lambda)$. Then $c_{\lambda,
    \gamma, \{\tau_i\}, \{\rho_i\}} = 0$.
\end{lem}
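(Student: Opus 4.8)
The plan is to reduce to the single-part case $\gamma = (e)$ and then prove a sharp length bound there using the combinatorial description of the Main Claim. Write $r = \sum_{j \in I(\gamma)} |\tau_j|$ and $s = \sum_{j \in I(\gamma)} |\rho_j|$ for the total $\kappa$- and $\psi$-degrees. By the factorization \eqref{eq:general}, the coefficient $c_{\lambda, \gamma, \{\tau_i\}, \{\rho_i\}}$ is a sum over refining functions $\varphi \colon I(\lambda) \to I(\gamma)$ of products $\prod_{j \in I(\gamma)} c_{\lambda|_{\varphi^{-1}(j)}, (\gamma_j), \{\tau_j\}, \{\rho_j\}}$, and the preimages $\varphi^{-1}(j)$ partition $I(\lambda)$, so $\ell(\lambda) = \sum_{j} |\varphi^{-1}(j)|$. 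Thus it suffices to show that each single-part coefficient $c_{\mu, (e), \{\tau\}, \{\rho\}}$ vanishes once $\ell(\mu) > |\tau| + |\rho| + 1$. Granting this, if $\ell(\lambda) > r + s + \ell(\gamma) = \sum_j (|\tau_j| + |\rho_j| + 1)$, then for every $\varphi$ some index $j$ must satisfy $|\varphi^{-1}(j)| > |\tau_j| + |\rho_j| + 1$ (otherwise summing the bounds gives $\ell(\lambda) \le r+s+\ell(\gamma)$), so that factor, and hence the whole product, vanishes.

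For the single-part bound I would invoke the Main Claim to express $c_{\mu, (e), \{\tau\}, \{\rho\}}$ as an average over admissible total orders of the number of permutations of the prescribed symbols satisfying conditions \eqref{cond:11} and \eqref{cond:12}. Fix one such order, in which every element of $I(\tau)$ precedes every element of $I(\mu)$, and suppose a valid permutation exists. For each kind $i \in I(\mu)$ consider its last occurrence. Condition \eqref{cond:12} says this symbol is not immediately followed by any symbol of a kind in $I(\mu)$, so it is either the final symbol of the whole word or is immediately followed by a symbol of a kind in $I(\tau) \sqcup I(\rho)$.

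The crucial point is to rule out first-$\tau$ followers. If the last occurrence of some $i \in I(\mu)$ were immediately followed by the \emph{first} occurrence of some $j \in I(\tau)$, then condition \eqref{cond:11}, applied with both indices in $I(\mu) \sqcup I(\tau)$, would force $i < j$; but our order has $j < i$, a contradiction. Hence in any valid permutation for this order each last-occurrence of an $I(\mu)$-kind is either the final symbol of the word or is immediately followed by a $\rho$-symbol or a non-first $\tau$-symbol. Sending each $i \in I(\mu)$ to this follower, and the unique exceptional $i$ (if any) to a formal symbol $\star$, defines a map from $I(\mu)$ into the set of all $\rho$-symbols, all non-first $\tau$-symbols, and $\star$. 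Distinct last-occurrences sit at distinct positions, so their followers are distinct and the map is injective; its target has size $|\rho| + |\tau| + 1$. Therefore $\ell(\mu) \le |\tau| + |\rho| + 1$, and when this fails no valid permutation exists for any admissible order, so every count being averaged is zero and $c_{\mu, (e), \{\tau\}, \{\rho\}} = 0$.

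I expect the main obstacle to be obtaining the sharp constant. A naive block-counting argument, observing only that consecutive maximal runs of $I(\mu)$-symbols must be separated by at least one symbol of a kind in $I(\tau) \sqcup I(\rho)$, yields merely $\ell(\mu) \le |\tau| + \ell(\tau) + |\rho| + 1$, which is too weak to survive the factorization. The superfluous $\ell(\tau)$ is exactly what is removed by the observation that first-$\tau$ followers are forbidden, and this is precisely where condition \eqref{cond:11} together with the order constraint placing $I(\tau)$ below $I(\mu)$ does the essential work. Isolating this mechanism, rather than the routine injection that follows from it, is the heart of the argument.
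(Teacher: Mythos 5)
Your proof is correct and takes essentially the same route as the paper: reduce via the factorization \eqref{eq:general} to the single-part coefficients and kill each summand because some factor must have $\ell(\varphi^{-1}(j)) > r(j)+s(j)+1$. The paper asserts that single-factor bound without justification; your injection argument via the Main Claim (the last occurrences of the $I(\mu)$-kinds must be followed by pairwise distinct non-first $\tau$-symbols, $\rho$-symbols, or the end of the word, with first-$\tau$ followers excluded by condition \eqref{cond:11} and the order constraint) is exactly the intended reason and supplies the missing detail.
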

\begin{proof}
  We examine the summand in formula \eqref{eq:general} corresponding
  to some $\varphi$. A factor in this summand can only be nonzero if
  $r(i) + s(i) + 1\ge \ell(\varphi^{-1}(i))$. Therefore each summand
  will vanish unless $r + s + \ell(\gamma) \ge \ell(\lambda)$.~\hfill $\Box$
\end{proof}

Now let us suppose that $(\gamma$, $\{\tau_i\}$, $\{\rho_i\})$ is the
housing data of a boundary class of the generating set. We need to
show that $c_{\lambda, \gamma, \{\tau_i\}, \{\rho_i\}} = 0$ for each
$\lambda$ which is not a housing partition.

Let us first study the case $\ell(\lambda) > 2g - 2 - d$. Since
$\gamma$ is derived from a boundary stratum of codimension at most
$2g-3-d-r-s$ (we are missing the $\psi$- and $\kappa$-classes from the
components, which do not contribute to $\gamma$) by diminishing parts
by their $\kappa$- and $\psi$-degrees, we have the inequality
$\ell(\gamma) \le 2g - 2 - d - r - s$. Then by Lemma~\ref{lem:van} we
are done in this case. The same argument settles also the case where
there are components of the boundary stratum we are considering which
do not appear in $\gamma$ and $\ell(\lambda) = 2g - 2 - d$.

Now assume that $\ell(\lambda) = 2g - 2 - d$ and that $\lambda$
contains no even part. Then by the same arguments if the coefficient
is nonzero, we must have $\ell(\gamma) = 2g - 2 - d - r - s$. Furthermore,
from the proof of Lemma~\ref{lem:van} we see that $r(i) + s(i) =
\ell(\varphi^{-1}(i)) - 1$ for each $i \in I(\gamma)$. This implies
$\ell(\varphi^{-1}(i)) + r(i) + s(i) \equiv 1 \pmod{2}$, and therefore
for each $i \in I(\gamma)$, we have $\gamma_i + r(i) + s(i) \equiv 1
\pmod{2}$. Hence each part of the housing data (for the underlying
boundary stratum), which $\gamma$ was obtained from by subtraction of
$r(i) + s(i)$ from each part, is odd. This is a contradiction, so the
coefficient must be zero, as desired.

\section{The Rank Theorem}\label{sec:rank}

\subsection{Reformulation}

Let us first formulate a stronger version of the Rank Theorem.
\begin{thm}\label{thm:rank}
  For any $\kappa$-polynomial $F$ in degree $r := 2g - 3 - d$, the
  following two statements are equivalent:
  \begin{enumerate}
  \item[\rm (1)] For any $\pi \in P(g - 2 - r)$, we have $\int_{\overline M_g}
    F \kappa_\pi \lambda_g\lambda_{g - 1} = 0$.
  \item[\rm (2)] There is a $B \in PBR^r(M_g^c)$ such that for any $\pi' \in
    P(2g - 3 - r)$ we have $\int_{\overline M_g} (F - B) \kappa_{\pi'}
    \lambda_g = 0$.
  \end{enumerate}
\end{thm}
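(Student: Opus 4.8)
The plan is to reformulate both conditions as linear conditions on the coefficient vector of $F \in \mathbb{Q}^{P(r)}$ and to connect them through the combinatorial coefficients $c_{\lambda,(d),\{\tau\},\{\rho\}}$ of Section~\ref{sec:matrix}. First I would reinterpret condition (2). Writing $F = \sum_{\sigma \in P(r)} F_\sigma \kappa_\sigma$, the $\lambda_g$ pairing gives a linear form on $\mathbb{Q}^{P(d)}$, namely $\pi \mapsto \int_{\overline M_g} F\kappa_\pi\lambda_g$, which up to the fixed nonzero socle scalar equals $\sum_\sigma F_\sigma\, v_{(d),\{\sigma\},\{\emptyset\}}$, the functional attached to a single genus-$g$ vertex carrying the monomial $\kappa_\sigma$ and no $\psi$ classes. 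Since the pure boundary strata realize precisely the forms $M_\lambda$ for housing partitions $\lambda$, and these span the same space as all the $v_{\gamma,\{\tau_i\},\{\rho_i\}}$ by the Housing Theorem, condition (2) holds if and only if this form lies in the span of $\{M_\lambda : \lambda\text{ housing}\}$; equivalently, expanding it as $\sum_\lambda b_\lambda(F) M_\lambda$ in the basis \eqref{eq:M_lambda}, that $b_\lambda(F) = 0$ for every non-housing $\lambda$. By linearity $b_\lambda(F) = \sum_\sigma F_\sigma\, c_{\lambda,(d),\{\sigma\},\{\emptyset\}}$, so Lemma~\ref{lem:van} (applied with $\gamma = (d)$ of total $\psi$-degree $0$ and total $\kappa$-degree $r$) already forces $b_\lambda(F) = 0$ whenever $\ell(\lambda) > r+1$. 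Moreover a parity count shows a length-$(r+1)$ partition of $d = 2g-3-r$ always has an even number of even parts, so by Lemma~\ref{lem:housepart} the remaining non-housing partitions are exactly the all-odd ones of length $r+1$. Thus condition (2) is equivalent to $b_\lambda(F) = 0$ for every all-odd $\lambda \vdash d$ of length $r+1$.

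The crux is to identify these coefficients with $\lambda_g\lambda_{g-1}$ evaluations. Writing $\lambda_i = 2a_i + 1$ sets up a bijection between all-odd length-$(r+1)$ partitions $\lambda \vdash d$ and partitions $\pi'(\lambda) := (a_i)_{a_i > 0} \in P(g-2-r,\, r+1)$. I would prove
\[
b_\lambda(F) = \varepsilon_\lambda \int_{\overline M_g} F\,\kappa_{\pi'(\lambda)}\,\lambda_g\lambda_{g-1}
\]
for a nonzero constant $\varepsilon_\lambda$ (up to the fixed socle scalars). Here I would use the permutation-counting interpretation of $c_{\lambda,(d),\{\sigma\},\{\emptyset\}}$ furnished by the Main Claim (with all $\rho$-symbols absent) and develop the parallel interpretation of the $\lambda_g\lambda_{g-1}$ socle evaluation, the analogue of Lemma~\ref{lem:tool} obtained from the $\lambda_g\lambda_{g-1}$ integral formula of Section~\ref{sec:defs}. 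The key structural input is that $\lambda_i + 1 = 2(a_i+1)$ is even for all-odd $\lambda$: this is what lets the permutations counted by the Main Claim be folded into the matchings governed by the factors $(2a_i+1)!!$, matching the two counts term by term under $\lambda \leftrightarrow \pi'(\lambda)$. I expect this identification to be the main obstacle, since it is exactly the point at which the compact-type ($\lambda_g$) and smooth ($\lambda_g\lambda_{g-1}$) pairings are linked, and for which, as remarked in the introduction, there is no geometric explanation.

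Finally I would close the loop. The previous steps show that (2) is equivalent to the vanishing $\int_{\overline M_g} F\kappa_{\pi'}\lambda_g\lambda_{g-1} = 0$ for all $\pi' \in P(g-2-r,\, r+1)$, whereas (1) asks the same vanishing over all of $P(g-2-r)$. To remove the length restriction I would apply the $\lambda_g\lambda_{g-1}$-analogue of the matrix inversion of Section~\ref{sec:matrix} on the $M_g$ side: expanding the evaluation functionals in the corresponding $M$-type basis and invoking the Lemma~\ref{lem:van} bound (now for a single length-one vertex of $\kappa$-degree $r$) shows that the evaluations against $\kappa_{\pi'}$ with $\ell(\pi') > r+1$ are linear combinations of the shorter ones, so the two systems of equations cut out the same subspace of $\mathbb{Q}^{P(r)}$. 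This yields (1) $\iff$ (2). As a consequence, the rank of the full pairing exceeds that of the boundary pairing by $\dim\mathbb{Q}^{P(r)} - \dim\{F : F\text{ satisfies }(1)\}$, which is precisely the rank of the $\lambda_g\lambda_{g-1}$ pairing, recovering the Rank Theorem.
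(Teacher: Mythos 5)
Your overall architecture matches the paper's: you correctly reduce condition (2) to the vanishing of the coefficients $b_\lambda(F)$ of the non-housing $M_\lambda$ in the virtual-boundary expansion of $F$, correctly identify (via the parity count and Lemma~\ref{lem:housepart}) the non-housing partitions of length $r+1$ as the all-odd ones, and correctly set up the bijection $\lambda_i = 2\sigma_i+1$ with $P(g-2-r,\,r+1)$. However, the crux of your argument rests on a false identity. You claim $b_\lambda(F) = \varepsilon_\lambda \int_{\overline M_g} F\,\kappa_{\pi'(\lambda)}\,\lambda_g\lambda_{g-1}$ for a nonzero scalar $\varepsilon_\lambda$, i.e.\ that the functional $\eta_\sigma$ of the paper is \emph{proportional} to the functional $\mu_\sigma$. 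This diagonal relation does not hold. What is true, and what the paper proves, is only that the two families $\{\eta_\sigma\}$ and $\{\mu_\sigma\}$ span the same subspace of $(\mathbb Q^{P(r)})^*$: after invertible transformations one arrives at the \emph{triangular} relation
\begin{equation*}
  \mu''_\sigma = \sum_{P \vdash I(\sigma)} \prod_{i \in P} F(i)\, \eta''_{\sigma^P},
\end{equation*}
in which every coarsening $\sigma^P$ of $\sigma$ genuinely contributes with a nonzero coefficient $F(i)$. Your hoped-for term-by-term matching of the two permutation counts cannot succeed: in the comb-like-order interpretation of $\mu''_\sigma$ one must also count permutations in which the last symbol of a $\sigma$-block is immediately followed by a symbol of another $\sigma$-block, and these are precisely the permutations that get reorganized into contributions from coarser partitions $\sigma^P$. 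So as written you would be attempting to prove something untrue; the statement you actually need (equality of spans) is weaker, suffices for your concluding step, and is the genuinely hard content of the Rank Theorem, which your sketch does not supply a mechanism for.

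Two smaller points. First, your reduction of condition (1) to the length-restricted condition differs from the paper's: the paper multiplies $F$ by $\kappa_\pi$, uses the Housing Theorem to replace $B\kappa_\pi$ by a pure boundary class in $PBR^{g-2}(M_g^c)$, and then applies the socle-degree case where the length restriction is vacuous. Your proposed route---a $\lambda_g\lambda_{g-1}$ analogue of the matrix inversion of Section~\ref{sec:matrix} together with an analogue of Lemma~\ref{lem:van} showing that evaluations against $\kappa_{\pi'}$ with $\ell(\pi')>r+1$ lie in the span of the shorter ones---is plausible but would need to be set up from scratch, and the paper's argument is both shorter and already has all its ingredients in hand. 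Second, your final sentence deducing the rank statement from the equivalence $(1)\Leftrightarrow(2)$ is fine, but note that it implicitly uses that the kernel of $F \mapsto (\pi \mapsto \int F\kappa_\pi\lambda_g\lambda_{g-1})$ computes the corank of the $\lambda_g\lambda_{g-1}$ pairing in degree $r$, which requires $r \le g-2$; this is consistent with the ranges in the theorem but worth stating.
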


It will be convenient to show that we can replace the first condition
in Theorem~\ref{thm:rank} by
\begin{enumerate}\setcounter{enumi}{2}
\item[\rm (3)] For any $\pi \in P(g - 2 - r)$ of length at most $r + 1$ we have
  $\int_{\overline M_g} F \kappa_\pi \lambda_g\lambda_{g - 1} = 0$.
\end{enumerate}
Then Theorem~\ref{thm:rank} will follow from the following simple
argument. Consider an $F$ satisfying the second condition and we want
to show that $\int_{\overline M_g} F \kappa_\pi \lambda_g\lambda_{g - 1} = 0$
for some given $\pi \in P(g - 2 - r)$. Notice that then also
$F\kappa_\pi$ satisfies the second condition since $B\kappa_\pi$ lies
in $BR^{g - 2}(M^c_g)$ and by the Housing Theorem can be replaced by
some $B' \in PBR^{g - 2}(M^c_g)$. We then find that $\int_{\overline M_g} F
\kappa_\pi \lambda_g\lambda_{g - 1} = 0$ since in this case the length
condition is trivial.

Let $v_F\in (\mathbb Q^{P(d)})^*$ be defined by the $\lambda_g$-pairing of
$F$ with the kappa ring. The linear forms $M_\lambda$ (defined in
Section~\ref{sec:reduction})
form a basis for $(\mathbb Q^{P(d)})^*$, so $v_F$ can be written as a linear
combination of them. The second condition in the theorem is then equivalent
to the condition that only the $M_\lambda$ coming from actual boundary strata
(those where $\lambda$ is a housing partition) are used in this expansion
of $F$. Notice that by Lemma~\ref{lem:van} we can assume that the $\lambda$
are partitions of $2g - 3 - r$ of length at most $r +
1$. By Lemma~\ref{lem:housepart}, this means that the only non-housing
partitions that might occur are
of length exactly $r + 1$ with only odd parts. For the proof of the
Rank Theorem we will
need to understand the coefficients corresponding to these partitions
better.

Observe that partitions of $2g - 3 - r$ of length exactly $r +
1$ with only odd parts correspond to partitions of $g - 2 - r$ of
length at most $r + 1$. So for any $\sigma \in P(g - 2 - r, r + 1)$ we
can look at $\eta_\sigma, \mu_\sigma \in (\mathbb Q^{P(r)})^*$ with
$\eta_\sigma(\tau) := c_{\lambda, (d), \{\tau\}, \{\emptyset\}}$, where $\lambda$
is the partition of $2g - 3 - r$ of length $r + 1$ corresponding to
$\sigma$, and $\mu_\sigma(\tau)$ is up to a factor the integral
$\int_{\overline M_g} \kappa_\sigma \kappa_\tau \lambda_g\lambda_{g - 1}$,
namely
\begin{equation*}
  \mu_\sigma(\tau) = \sum_{P \vdash I(\sigma) \sqcup I(\tau)} (-1)^{\ell(\sigma) + \ell(\tau) + |P|} \frac{(2g - 3 + |P|)!}{\prod_{i \in P}(2(\sigma, \tau)^P_i + 1)!!}.
\end{equation*}
So what we need to show is the following:

\bigskip

\noindent{\bf Claim.}
{\em
  The $\mathbb Q$-subspaces of $(\mathbb Q^{P(r)})^*$ spanned by
  $\eta_\sigma$ and $\mu_\sigma$ for $\sigma$ ranging over all
  partitions of $g - 2 - r$ of length at most $r + 1$ are equal.}

\bigskip

Recall from Section~\ref{sec:reinter2} that $\eta_\sigma(\tau)$ is the
number of all permutations $S$ of $\lambda_i + 1$ symbols of kind $i
\in I(\lambda)$ and $\tau_i + 1$ symbols of kind $i \in I(\tau)$
satisfying
\begin{enumerate}
\item[\rm (1)] \phantomsection\label{cond:21} The last symbol of kind $i$ for some $i \in
  I(\lambda)$ is either at the end of the sequence or immediately
  followed by a symbol of kind $j$ for some $j \in I(\tau)$ which is
  not the first of its kind.
\item[\rm (2)] \phantomsection\label{cond:22} The successor of the last element of kind $i$ is
  not the first element of kind $j$ for any $i, j \in I(\tau)$ with $i
  < j$, where we fix some order on $I(\tau)$.
\end{enumerate}

Before coming to the main part of the proof we apply an invertible
transformation $\Phi$ to $(\mathbb Q^{P(r)})^*$ to simplify the
definitions of $\eta$ and $\mu$. The inverse of the transformation we
want to apply sends a linear form $\varphi' \in (\mathbb Q^{P(r)})^*$
to the linear form $\varphi$ defined by
\begin{equation*}
  \varphi(\tau) = \sum_{P \vdash I(\tau)}(-1)^{\ell(\tau) + |P|} \varphi'(\tau^P).
\end{equation*}
The transformation $\Phi$ defined in this way is clearly
invertible. By a similar argument as in the proof of
Lemma~\ref{lem:tool}, we can show that the image $\eta'_\sigma$ of
$\eta_\sigma$ under $\Phi$ is defined in the same way as $\eta_\sigma$
but leaving out Condition~(\hyperref[cond:22]{2}) on the permutations.

To study the action of $\Phi$ on $\mu$, we use the following lemma:

\begin{lem}
  Let $F$ be a function $F\colon P(n + m) \to \mathbb Q$ and
  define for any $\sigma \in P(n)$ functions $G_\sigma, G'_\sigma\colon
  P(m) \to \mathbb Q$ in terms of $F$ by
  \begin{align*}
    G_\sigma(\tau) &= \sum_{P \vdash I(\sigma) \sqcup I(\tau)} F((\sigma\sqcup \tau)^P) \\
    G'_\sigma(\tau) &= \sum_{\substack{P \vdash I(\sigma) \sqcup I(\tau) \\ P\text{ separates }I(\tau)}} F((\sigma\sqcup \tau)^P),
  \end{align*}
  where the second sum just runs over set partitions $P$ such that
  each element of $I(\tau)$ belongs to a separate part. Then
  \begin{equation*}
    G_\sigma(\tau) = \sum_{P \vdash I(\tau)} G'_\sigma(\tau^P).
  \end{equation*}
\end{lem}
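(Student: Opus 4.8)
The plan is to prove the identity by exhibiting a summand-preserving bijection between the index sets of the two sides. The left-hand side $G_\sigma(\tau)$ runs over all set partitions $R \vdash I(\sigma) \sqcup I(\tau)$, while the right-hand side runs first over $P \vdash I(\tau)$ and then, inside $G'_\sigma(\tau^P)$, over all set partitions $Q \vdash I(\sigma) \sqcup I(\tau^P)$ which separate $I(\tau^P)$ (I rename the inner summation variable to $Q$ to avoid clashing with $P$). Recall that $I(\tau^P) = P$, i.e. the parts of the refined partition $\tau^P$ are indexed by the blocks of $P$. So the claim reduces to constructing a bijection $R \leftrightarrow (P, Q)$ under which $F((\sigma \sqcup \tau)^R) = F((\sigma \sqcup \tau^P)^Q)$.

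First I would construct the forward map. Given $R$, let $P$ be the set partition of $I(\tau)$ in which two elements of $I(\tau)$ lie in the same block precisely when they lie in the same block of $R$. The key structural observation is that, with this definition, each block $B$ of $R$ meets $I(\tau)$ in either the empty set or in exactly one block of $P$; this is immediate since the blocks of $P$ are by construction the traces on $I(\tau)$ of the blocks of $R$. I would then define $Q$ by collapsing: in each block of $R$, replace its intersection with $I(\tau)$ (a single block $p$ of $P$, when nonempty) by the corresponding single index $p \in P = I(\tau^P)$, leaving the $I(\sigma)$-part untouched. By the structural observation each block of $Q$ then contains at most one element of $P$, so $Q$ separates $I(\tau^P)$, as required.

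Next I would check bijectivity by writing down the inverse explicitly: given $(P, Q)$, each block $C$ of $Q$ contains at most one index $p \in P$, which is itself a subset of $I(\tau)$, so I expand $C$ back to the block $(C \cap I(\sigma)) \cup p$ of $R$ (or just $C \cap I(\sigma)$ if $C$ contains no element of $P$). It is routine that these two constructions are mutually inverse. Finally, I would verify that the summands agree: for the block $B$ of $R$ corresponding to the block $C$ of $Q$, the value of $(\sigma \sqcup \tau)^R$ at $B$ is $\sum_{i \in C \cap I(\sigma)} \sigma_i + \sum_{i \in p} \tau_i$, which equals $\sum_{i \in C \cap I(\sigma)} \sigma_i + (\tau^P)_p$, the value of $(\sigma \sqcup \tau^P)^Q$ at $C$, since $(\tau^P)_p = \sum_{i \in p} \tau_i$ by definition. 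Hence the two partitions coincide and $F$ takes equal values on them.

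The main obstacle is essentially bookkeeping rather than conceptual: one must keep straight the three index sets $I(\sigma) \sqcup I(\tau)$, $I(\tau)$, and $I(\sigma) \sqcup I(\tau^P)$, and in particular verify carefully that the condition that $Q$ separate $I(\tau^P)$ matches exactly the freedom that is lost once one records the induced partition $P$ of $I(\tau)$. Once the structural observation that each block of $R$ meets $I(\tau)$ in at most one block of $P$ is in hand, both the bijectivity and the summand-matching are direct verifications.
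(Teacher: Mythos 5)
Your proof is correct and is essentially the same argument as the paper's: the paper describes the map $(P,Q)\mapsto R$ obtained by expanding each element of $I(\tau^P)$ back into the corresponding part of $P$ and asserts it is a bijection, while you describe the inverse map $R\mapsto (P,Q)$ (taking $P$ to be the trace of $R$ on $I(\tau)$ and collapsing) and check the details. The extra verification of mutual inverseness and of the matching of summands is sound and simply fills in what the paper leaves to the reader.
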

\begin{proof}
  Given set partitions $P$ of $I(\tau)$ and $Q$ of $I(\sigma)\sqcup
  I(\tau^P)$, with $Q$ separating $I(\tau^P)$, we can alter $Q$ by
  replacing each element of $I(\tau^P)$ by the elements in the
  corresponding part of $P$. Each set partition of $I(\sigma)\sqcup
  I(\tau)$ is obtained exactly once by this construction.
  \hfill $\Box$
\end{proof}

Using this lemma and keeping track of the sign factors, we have that $\mu'_\sigma(\tau)$ is
\begin{equation*}
  \mu'_\sigma(\tau) = \sum_{\substack{P \vdash I(\sigma) \sqcup I(\tau) \\ P\text{ separates }I(\tau)}} (-1)^{\ell(\sigma) + \ell(\tau) + |P|} \frac{(2g - 3 + |P|)!}{\prod_{i \in P}(2(\sigma\sqcup \tau)^P_i + 1)!!}.
\end{equation*}
We can use the lemma again with the roles of $\sigma$ and $\tau$
interchanged to replace the generators of the span of $\mu'_\sigma$ by
$\mu''_\sigma$ with
\begin{equation}\label{eq:mu-final}
  \mu''_\sigma(\tau) := \sum_{\substack{P \vdash I(\sigma) \sqcup I(\tau) \\ P\text{ separates }I(\tau) \\ P\text{ separates }I(\sigma)}} (-1)^{\ell(\sigma) + \ell(\tau) + |P|} \frac{(2g - 3 + |P|)!}{\prod_{i \in P}(2(\sigma\sqcup \tau)^P_i + 1)!!}.
\end{equation}

Therefore we have reduced the proof of the Rank Theorem to proving the
following claim.

\bigskip
\noindent
{\bf Claim.} {\em
  The $\mathbb Q$-subspaces of $(\mathbb Q^{P(r)})^*$ spanned by
  $\eta'_\sigma$ and $\mu''_\sigma$ for $\sigma$ ranging over all
  partitions of $g - 2 - r$ of length at most $r + 1$ are equal.}

\subsection{Further strategy of proof}

In order to prove the claim we will establish interpretations for
$\eta'_\sigma(\tau)$ and $\mu''_\sigma(\tau)$ as counts of symbols of
different kinds satisfying some ordering constraints. This enables us
to find nonzero constants $F(i)$ for each $i \in I(\sigma)$
independent of $\tau$ such that
\begin{equation*}
  \mu''_\sigma(\tau) = \sum_{P \vdash I(\sigma)} \prod_{i \in P} F(i) \frac{\eta'_{\sigma^P}(\tau)}{(r + 1 - |P|)!},
\end{equation*}
giving a triangular transformation.

For the interpretations the notion of a \emph{comb-like order} plays
an important role. We say that symbols $i_1 \dotsc i_{2m + 1}$ are in
comb-like order if we have the relations $i_1 < i_3 < \dots < i_{2m +
  1}$ and $i_{2j} < i_{2j + 1}$ for $j \in [m]$. This is illustrated
in Figure~\ref{fig:comb}.

\begin{figure}[!htbp]
\begin{center}  \begin{tikzpicture}
    \begin{scope}[scale=0.5]
    \coordinate [label=left:$i_1$] (i1) at (0,0);
    \coordinate [label=right:$i_2$] (i2) at (1,0);
    \coordinate [label=left:$i_3$] (i3) at (0.5,1);
    \coordinate [label=right:$i_4$] (i4) at (1.5,1);
    \coordinate [label=left:$i_5$] (i5) at (1,2);
    \coordinate [label=left:$i_{2m -1}$] (j3) at (1.5,3);
    \coordinate [label=right:$i_{2m}$] (j2) at (2.5,3);
    \coordinate [label=left:$i_{2m + 1}$] (j1) at (2,4);
    
    \draw (i1) -- (i5);
    \draw (i3) -- (i2);
    \draw (i5) -- (i4);
    \draw[style=dotted] (i5) -- (j3);
    \draw (j3) -- (j1);
    \draw (j1) -- (j2);
    \end{scope}
  \end{tikzpicture}
  \end{center}
  \caption{A comb-like order}

  \vspace{-4cm}\phantomsection\label{fig:comb}

\vspace{4cm}\end{figure}
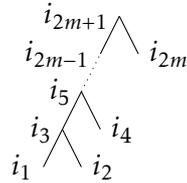

Note that the number of comb-like orderings of $2m + 1$ symbols is
$(2m + 1)! / (2m + 1)!!$. More generally the number
\begin{equation*}
  \frac{(2|\pi| + \ell(\pi))!}{\prod_{i \in I(\pi)}(2\pi_i + 1)!!}
\end{equation*}
corresponding to a partition $\pi$ counts the number of permutations
of the $2|\pi| + \ell(\pi)$ symbols $\bigcup_{i \in I(\pi)} \{i_1,
\dots,$ $i_{2\pi_i + 1}\}$ such that symbols corresponding to the same
part of $\pi$ appear in comb-like order.

\subsection{Combing orders}

We obtain a first reinterpretation of $\eta'_\sigma(\tau)$ by numbering
the symbols of the same kind:

\bigskip
\noindent
{\em Interpretation A1. }
  $\eta'_\sigma(\tau)$ is the number of all permutations of symbols
  $i_1, \dotsc, i_{\tau_i + 1}$ for $i \in I(\tau)$ and $i_1, \dotsc,
  i_{\lambda_i + 1}$ for $i \in I(\lambda)$ such that for fixed $i \in
  I(\tau) \sqcup I(\lambda)$ the $i_j$ appear in order and for all $i
  \in I(\lambda)$ the symbol $i_{\lambda_i + 1}$ is either at the end
  of the sequence or immediately followed by some $j_k$ for $j \in
  I(\tau)$ and $k \neq 1$.

\bigskip

Since $\lambda$ has length $r+1$ and $|\tau| = r$, such a permutation
gives a bijection between the $j_k$ for $j \in I(\tau)$ with $k \neq
1$ and all but one of the $i_{\lambda_i+1}$ for $i \in
I(\lambda)$. After picking this bijection, we can remove the
$i_{\lambda_i+1}$.

\bigskip
\noindent
{\em Interpretation A2. }
  $\eta'_\sigma(\tau)$ is the sum over bijections
  \begin{equation*}
    \varphi\colon I(\lambda) \to \{i_j \mid i \in I(\tau), j \neq 1\} \sqcup
    \{\text{End}\}    
  \end{equation*}
  of the number of permutations of symbols $i_1, \dotsc, i_{\tau_i +
    1}$ for $i \in I(\tau)$ and $i_1, \dotsc, i_{\lambda_i}$ for $i
  \in I(\lambda)$ such that symbols of the same kind appear in order
  and all symbols $i_j$ for $i \in I(\lambda)$ appear before
  $\varphi(i)$ (this condition is empty if $\varphi(i) = \text{End}$).

\bigskip

We can then add new symbols immediately following each $i_{\lambda_i}$
for $i\in I(\lambda)$ and reindex the $i_j$ for $i \in I(\tau)$ to
create comb-like orderings.

\bigskip
\noindent
{\em Interpretation A3. }\phantomsection\label{intermediateA}
  $\eta'_\sigma(\tau)$ is the sum over bijections
  \begin{equation*}
    \varphi\colon I(\lambda) \to \{i_j \mid i \in I(\tau), j\text{ even}\} \sqcup
    \{\text{End}\}     
  \end{equation*}
  of the number of permutations of symbols $i_1, \dotsc, i_{2\tau_i +
    1}$ for $i \in I(\tau)$, $i_1, \dotsc, i_{\lambda_i}$ for $i \in
  I(\lambda)$, and an additional symbol End such that the $i_j$ for $i
  \in I(\tau)$ appear in comb-like order, the $i_j$ for $i \in
  I(\lambda)$ appear in order, and $i_{\lambda_i}$ for $i \in
  I(\lambda)$ is immediately followed by $\varphi(i)$.

\bigskip

Recall that $\lambda$ is defined in terms of $\sigma$ by taking the
numbers $2\sigma_i + 1$ for each $i \in I(\sigma)$ and adding as many
ones as needed to reach length $r + 1$. There is only one symbol $i_1$
of kind $i$ for $i \in I(\lambda) \setminus I(\sigma)$ in
Interpretation~\hyperref[intermediateA]{A3} of $\eta'_\sigma(\tau)$ and it must
be immediately followed by $\varphi(i)$. For convenience set $(r + 1 -
\ell(\sigma))!  \cdot \eta''_\sigma := \eta'_\sigma$. Removing these
symbols $i_1$ gives an interpretation of $\eta''_\sigma$.

\bigskip
\noindent
{\em Interpretation A4. }\phantomsection\label{finalA}
  $\eta''_\sigma(\tau)$ is a sum over all injections
  \begin{equation*}
    \varphi\colon I(\sigma) \to \{i_j \mid i \in I(\tau), j\text{ even}\} \sqcup
    \{\text{End}\}
  \end{equation*}
  of the number of permutations of symbols $i_1, \dotsc, i_{2\tau_i +
    1}$ for $i \in I(\tau)$, $i_1, \dotsc, i_{2\sigma_i + 1}$ for $i
  \in I(\sigma)$, and an additional symbol End such that the $i_j$ for
  $i \in I(\tau)$ appear in comb-like order, the $i_j$ for $i \in
  I(\sigma)$ appear in order, and $i_{2\sigma_i + 1}$ for $i \in
  I(\sigma)$ is immediately followed by $\varphi(i)$.

\bigskip

We now switch to the interpretation of $\mu''_\sigma(\tau)$, which was
defined in \eqref{eq:mu-final}. The coefficient corresponding to a set
partition $P$ can be interpreted as the number of permutations of
symbols $i_1, \dotsc, i_{2(\sigma \sqcup \tau)^P_i + 1}$ for $i \in
I((\sigma\sqcup \tau)^P)$ and one additional symbol $\star$ such that
all $i_1, \dotsc, i_{2(\sigma\sqcup \tau)^P_i + 1}$ for $i \in
I((\sigma\sqcup \tau)^P)$ appear in comb-like order.

Because of the restrictions in the sum, the parts of $P$ are either
singletons or contain exactly one element from each of $I(\sigma)$ and
$I(\tau)$. This defines a function $\psi: I(\sigma) \to I(\tau) \sqcup
\{\star\}$, injective when restricted to the preimage of
$I(\tau)$. Interpreting the summands as counting comb-like orders and
cutting combs into two pieces for each part of $P$ of size two gives
the following:

\bigskip
\noindent
{\em Interpretation B1. }
  $\mu''_\sigma(\tau)$ is the sum over functions
  \begin{equation*}
    \psi\colon I(\sigma) \to I(\tau) \sqcup \{\star\}
  \end{equation*}
  such that $\psi|_{\psi^{-1}(I(\tau))}$ is injective, of a sign of
  $(-1)^{|\psi^{-1}(I(\tau))|}$ times the number of permutations of symbols
  $i_1, \dotsc, i_{2\tau_i + 1}$ for $i \in I(\tau)$, $i_1, \dotsc,
  i_{2\sigma_i + 1}$ for $i \in I(\sigma)$ and one additional symbol
  $\star$ such that all $i_1, \dotsc, i_{2\tau_i + 1}$ for $i \in
  I(\tau)$ and all $i_1, \dotsc, i_{2\sigma_i + 1}$ for $i \in
  I(\sigma)$ appear in comb-like order and such that $i_{2\sigma_i +
    1}$ for $i \in I(\sigma)$ with $\psi(i) \neq \star$ is immediately
  followed by $\psi(i)_1$.

\bigskip

Now we split the set of such permutations depending on the symbols
immediately following symbols $i_{2\sigma_i + 1}$ for $i \in
I(\sigma)$. We notice that the signed sum exactly kills those
permutations where some $i_{2\sigma_i + 1}$ for $i \in I(\sigma)$ is
immediately followed by some $j_1$ for $j \in I(\tau)$ since if such a
summand appears for some $\psi$ with $\psi(i) \neq j$ we must have
$\psi(i) = \star$ and we find the same summand with opposite sign in
the sum corresponding to the map $\psi'$ defined by $\psi'(i) = j$ and
$\psi'(k) = \psi(k)$ for $k \neq i$ and vice versa.

\bigskip
\noindent
{\em Interpretation B2. }\phantomsection\label{finalB}
  $\mu''_\sigma(\tau)$ is the number of permutations of symbols $i_1,
  \dotsc, i_{2\tau_i + 1}$ for $i \in I(\tau)$, $i_1, \dotsc,
  i_{2\sigma_i + 1}$ for $i \in I(\sigma)$ and one additional symbol
  $\star$ such that all $i_1, \dotsc, i_{2\tau_i + 1}$ for $i \in
  I(\tau)$ and all $i_1, \dotsc, i_{2\sigma_i + 1}$ for $i \in
  I(\sigma)$ appear in comb-like order and such that $i_{2\sigma_i +
    1}$ for $i \in I(\sigma)$ is not immediately followed by a symbol
  of the form $j_1$ with $j \in I(\tau)$.

\bigskip

Interpretations~\hyperref[finalA]{A4} and \hyperref[finalB]{B2} are very close. The
differences between the two of them are that the $\sigma$-type symbols
are in total order rather than comb-like order in \hyperref[finalA]{A4} and
that the conditions on the elements immediately following the
$i_{2\sigma_i + 1}$ are different.

We now break $\mu''_\sigma(\tau)$ into a sum over set partitions $P$
of $I(\sigma)$. Given a permutation of the symbols appearing in
Interpretation~\hyperref[finalB]{B2}, define a function
\[
\varphi: I(\sigma) \to \{i_j \mid i \in I(\tau),
j\text{ even}\} \sqcup \{\text{End}\}
\]
recursively by
\begin{equation*}
  \varphi(i) = \left\{
  \begin{aligned}
    j_{2k} && \text{if }i_{2\sigma_i + 1}\text{ for }i \in
    I(\sigma)\text{ is immediately
      followed by a symbol }\\
    && \text{ of the form }j_{2k}\text{ or }j_{2k + 1}\text{ with }j
    \in I(\tau), \\
    \text{End} && \text{if }i_{2\sigma_i + 1}\text{ for }i \in
    I(\sigma)\text{
      is immediately followed by }\star \\
    && \text{ or at the end of the sequence}, \\
    \varphi(j) && \text{if }i_{2\sigma_i + 1}\text{ for }i \in
    I(\sigma)\text{ is immediately
      followed by a symbol }\\
    && \text{ of the form }j_k\text{ with }j \in I(\sigma).
  \end{aligned}
  \right.
\end{equation*}

Then let $P$ be the set partition of preimages under $\varphi$.  We
will identify the summand of $\mu''_\sigma(\tau)$ corresponding to
such a set partition $P$ as $\eta''_{\sigma^P}(\tau)$ times a factor
depending only on $\sigma$ and $P$.

This factor is equal to
\[
\prod_{i\in P} F(i),
\]
where
\[
  F(i) = \frac{(2\sigma^P_i + |i| + 1)!}{\prod_{j \in i}(2\sigma_j + 1)!!}.
\]
Here, $F(i)$ should be interpreted as the number of permutations of
$2\sigma_j + 1$ symbols of kind $j$ for each $j \in i$ and one
additional symbol End such that the symbols of each kind appear in
comb-like order. If these permutations are interpreted as refinement
data, then the permutations counted by the $P$-summand of
$\mu''_\sigma(\tau)$ are the refinements by this data of the
permutations counted by $\eta''_{\sigma^P}(\tau)$.

Thus we have the identity
\begin{equation*}
  \mu''_\sigma = \sum_{P \vdash I(\sigma)} \prod_{i \in P} F(i) \eta''_{\sigma^P}.
\end{equation*}
This is a triangular change of basis with nonzero entries on the
diagonal, so the $\mu''$ and $\eta''$ span the same subspace in
$(\mathbb Q^{P(r)})^*$. This completes the proof of the Rank Theorem.

\providecommand{\bysame}{\leavevmode\hbox to3em{\hrulefill}\thinspace}
%\providecommand{\MR}{\relax\ifhmode\unskip\space\fi MR }
% \MRhref is called by the amsart/book/proc definition of \MR.
%\providecommand{\MRhref}[2]{%
%  \href{http://www.ams.org/mathscinet-getitem?mr=#1}{#2}
%}
%\providecommand{\href}[2]{#2}
%\begin{thebibliography}{{Kem}92}
%
%
%\end{thebibliography}

\bibliographystyle{amsalpha}
\bibliographymark{References}
% This would change the heading "References" by "Bibliography"
% \renewcommand{\refname}{Bibliography}
\def\cprime{$'$}

\end{document}